\definecolor{webgreen}{rgb}{0,.5,0}
\definecolor{webbrown}{rgb}{.6,0,0}
\tikzset{circle node/.style = {circle,inner sep=1pt,draw, fill=white},
        X node/.style = {fill=white, inner sep=1pt},
        dot node/.style = {circle, draw, inner sep=5pt}
        }
\newtheorem{theorem}{Theorem}
\newtheorem{lemma}[theorem]{Lemma}
\newtheorem{proposition}[theorem]{Proposition}
\newtheorem{corollary}[theorem]{Corollary}
\newtheorem{conjecture}[theorem]{Conjecture}
\theoremstyle{definition}
\newtheorem{example}[theorem]{Example}
\newcommand{\seqnum}[1]{\href{http://oeis.org/#1}{\underline{#1}}}
\begin{document}

\begin{center}
\vskip 1cm{\LARGE\bf On a Central Transform of Integer Sequences} \vskip 1cm \large
Paul Barry\\
School of Science\\
Waterford Institute of Technology\\
Ireland\\
\href{mailto:pbarry@wit.ie}{\tt pbarry@wit.ie}\\
\end{center}
\vskip .2 in

\begin{abstract} \noindent We use the concept of the half of a lower-triangular matrix to define a transformation on integer sequences. We explore the properties of this transformation, including in some cases a study of the Hankel transform of the transformed sequences. Starting from simple sequences with elementary rational generating functions, we obtain many sequences of combinatorial significance. We make extensive use of techniques drawn from the theory of Riordan arrays. \end{abstract}

\section{Introduction}
In this note, we work within the context of Riordan arrays \cite{book, SGWW} and their halves \cite{halves}. As we wish to obtain sequences of combinatorial significance, we work over the ring of integers $\mathbb{Z}$, and so we work with the matrix representation of the (integer) Riordan group defined as follows. Elements of the group are defined by a pair $(g,f)$ of power series in $\mathbf{Z}[[x]]$, where
$$g(x)=1+g_1x+g_2x^2+g_3x^3+\cdots,$$ and
$$f(x)=x+f_2x^2+f_3x^3+\cdots.$$
Thus the elements $g(x)$ are multiplicatively invertible (with inverse $\frac{1}{g(x)}$), and $f(x)$ is compositionally invertible, with inverse $\bar{f}(x)$, where $f(\bar{f}(x))=x$ and $\bar{f}(f(x))=x$. Thus $\bar{f}(x)$ is the solution $u(x)$ of the equation $f(u)=x$ where $u(0)=0$.

The matrix corresponding to the pair $(g(x), f(x))$ is then the invertible lower-triangular matrix with $(n,k)$-th element given by
$$t_{n,k}=[x^n] g(x) f(x)^k,\quad n,k \ge 0,$$ where $[x^n]$ is the functional on $\mathbb{Z}[[x]]$ that extracts the coefficient of $x^n$ in the power series to which it is applied \cite{method}.

The group law is given by
$$(g, f) \cdot (u, v) = (g.u(f), v(f)),$$ and the inverse is given
by
$$(g, f)^{-1}=\left(\frac{1}{g(\bar{f})}, \bar{f}\right).$$

In the matrix representation, the group product becomes ordinary matrix multiplication.

The ``fundamental theorem of Riordan arrays'' is the statement that
$$(g(x), f(x))\cdot h(x)= g(x)h(f(x)).$$
In matrix terms, this corresponds to the fact the $g(x)h(f(x))$ is the generating function of the sequence obtained by multiplying the vector $(h_0,h_1,h_2,\ldots)$ as a column vector by the matrix representing the array $(g(x), f(x))$. Thus each Riordan array $(g(x), f(x))$ with integer coefficients provides a mapping
$$\mathbb{Z}^{\mathbb{N}} \longrightarrow \mathbb{Z}^{\mathbb{N}}.$$
$$  a_n \mapsto b_n=\sum_{k=0}^n [x^n]g(x)f(x)^k a_k.$$
In this way each Riordan array defines a transformation on the space of integer sequences.
\begin{example}
The Riordan array $\left(\frac{1}{1-x},x\right)$ is the partial sum transformation. We have
$$b_n=\sum_{k=0}^n [x^n]\frac{1}{1-x}x^k a_k=\sum_{k=0}^n [x^{n-k}]\frac{1}{1-x}a_k=\sum_{k=0}^n [k \le n] a_k=\sum_{k=0}^n a_k.$$
Here, we have used the Iverson notation $[\mathcal{P}]$ which evaluates to $1$ if the statement $\mathcal{P}$ is true, and $0$ otherwise \cite{Concrete}.

The corresponding mapping of generating functions is given by
$$g(x) \mapsto \frac{1}{1-x} g(x).$$
The array $\left(\frac{1}{1-x}, x\right)$ is a member of the \emph{Appell subgroup} of the Riordan group. This is the subgroup of elements of the form $(g(x), x)$.
\end{example}
\begin{example} The Riordan array $\left(\frac{1}{1-x},  \frac{x}{1-x}\right)$ is the binomial transform.
Thus we have
$$b_n = \sum_{k=0}^n [x^n]\frac{1}{1-x}\left(\frac{x}{1-x}\right)^k a_k = \sum_{k=0}^n \binom{n}{k} a_k.$$
The corresponding mapping of generating functions is given by
$$g(x) \mapsto \frac{1}{1-x}g\left(\frac{x}{1-x}\right).$$
\end{example}
\begin{example} The Riordan array $(1, xc(x))=(1, x(1-x))^{-1}$ is the Catalan matrix \seqnum{A106566}. Here,
$c(x)=\frac{1-\sqrt{1-4x}}{2x}$ is the generating function of the Catalan numbers $C_n=\frac{1}{n+1}\binom{2n}{n}$.  The matrix has general element $\frac{k+0^{n+k}}{n+0^{n*k}}\binom{2n-k-1}{n-k}$. Thus we have
$$b_n = \sum_{k=0}^n [x^n](xc(x))^k a_k = \sum_{k=0}^n [x^{n-k}]c(x)^ka_k = \sum_{k=0}^n \frac{k+0^{n+k}}{n+0^{n*k}}\binom{2n-k-1}{n-k} a_k.$$
In terms of generating functions, we have
$$ g(x) \mapsto g(xc(x)).$$
\end{example}
Note that there are many so-called ``Catalan matrices'', including $(c(x), xc(x))$, $(1, xc(x)^2)$ and $(c(x), xc(x)^2)$. In this note we shall refer to $(1, xc(x))$ exclusively as the Catalan matrix.

Many examples of sequences and  Riordan arrays are documented in the On-Line Encyclopedia of Integer Sequences (OEIS) \cite{SL1, SL2}. Sequences are frequently referred to by their
OEIS number. For instance, the binomial matrix $\mathbf{B}=\left(\frac{1}{1-x}, \frac{x}{1-x}\right)$ (``Pascal's triangle'') is \seqnum{A007318}. The Catalan matrix $(1, xc(x))$ is \seqnum{A106566}. In the sequel we will not distinguish between an array pair $(g(x), f(x))$ and its matrix representation. Riordan arrays are infinite in extent; we display suitable truncations of the examples that we use.

For a sequence $a_n$, we call the sequence of determinants $h_n=|a_{i+j}|_{0 \le i,j \le \infty}$ the Hankel transform of $a_n$ \cite{Hankel}. As an example, the Hankel transform of the Catalan numbers $C_n$, and that of the once-shifted Catalan numbers $C_{n+1}$ are both given by the all ones sequence $1,1,1,\ldots$. We use the notation $H(x)$ for the generating function of the Hankel transform $h_n$. 

In the text below, we  shall use the same notation $\mathbf{C}$ for the transformation that we shall define, whether we apply it to the terms of a sequence or to the generating function of that sequence. The context will make the usage clear. The ``\textbf{C}'' in the notation stands for \emph{central}.

\section{The ``half'' of a matrix}
For a lower triangular matrix $(t_{n,k})_{0 \le n,k \le \infty}$ we define two ``half'' matrices \cite{halves, half}. The \emph{vertical} half of the matrix is the matrix $(t_{2n-k,n})$ and the \emph{horizontal} half is the matrix $(t_{2n,n+k})$. In terms of the Riordan group, we have
\begin{lemma}
Given a Riordan array $M=(G(x), F(x))$, its vertical half $V$ is the Riordan array
$$V=\left(\frac{\phi(x)\phi'(x) G(\phi(x))}{F(\phi(x))}, \phi(x)\right)=\left(\frac{x \phi'(x) G(\phi(x))}{\phi(x)}, \phi(x)\right),$$ where
$$\phi(x)=\text{Rev}\left(\frac{x^2}{F(x)}\right).$$
Given a Riordan array $(G(x), F(x))=(G(x), xh(x))$, its horizontal half $H$ is the Riordan array
$$H=\left(\frac{\phi'}{h(\phi)}, \phi\right)\cdot (G, F)=\left(\frac{x \phi'}{\phi}, \phi\right)\cdot (G, F)$$ where
$$\phi(x)=\text{Rev}\left(\frac{x^2}{F(x)}\right).$$
The matrices $V$ and $H$ are related by
$$V^{-1} \cdot H = (1, F(x)).$$
\end{lemma}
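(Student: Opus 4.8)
The plan is to compute the column generating functions of the two half matrices directly, identify each with a Riordan pair, and then read off $V^{-1}\cdot H$ from the group law. First I would rewrite the entries using $F(x)=xh(x)$: the $(n,k)$-entry of the vertical half is
$$t_{2n-k,n}=[x^{2n-k}]G(x)F(x)^n=[x^{n-k}]G(x)h(x)^n,$$
and that of the horizontal half is $t_{2n,n+k}=[x^{2n}]G(x)F(x)^{n+k}=[x^{n-k}]G(x)h(x)^{n+k}$. Forming the $k$-th column generating function $\sum_n t_{\bullet}\,x^n$ and substituting $n=m+k$ then turns each into a diagonal-type series, namely $\sum_n t_{2n-k,n}x^n=x^k\sum_{m\ge0}x^m[y^m]\left(G(y)h(y)^{k}\cdot h(y)^m\right)$ and $\sum_n t_{2n,n+k}x^n=x^k\sum_{m\ge0}x^m[y^m]\left(G(y)h(y)^{2k}\cdot h(y)^m\right)$.

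The key step is then to apply the generating-function form of Lagrange inversion,
$$\sum_{m\ge0}x^m[y^m]\left(B(y)h(y)^m\right)=\frac{B(\phi)}{1-xh'(\phi)},$$
which is legitimate precisely because $\phi(x)=\text{Rev}\left(x^2/F(x)\right)=\text{Rev}\left(x/h(x)\right)$ is the solution of $\phi=xh(\phi)$. Taking $B=Gh^{k}$, respectively $B=Gh^{2k}$, this gives
$$\sum_n t_{2n-k,n}x^n=x^k\,\frac{G(\phi)h(\phi)^{k}}{1-xh'(\phi)},\qquad \sum_n t_{2n,n+k}x^n=x^k\,\frac{G(\phi)h(\phi)^{2k}}{1-xh'(\phi)}.$$

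Next I would simplify using only $\phi=xh(\phi)$ and its derivative. That relation gives $h(\phi)=\phi/x$, hence $F(\phi)=\phi h(\phi)=\phi^2/x$, $x^kh(\phi)^k=\phi^k$ and $x^kh(\phi)^{2k}=F(\phi)^k$; implicit differentiation gives $\phi'\left(1-xh'(\phi)\right)=h(\phi)$, so that $\frac{1}{1-xh'(\phi)}=\frac{\phi'}{h(\phi)}=\frac{x\phi'}{\phi}=\frac{\phi\phi'}{F(\phi)}$. Substituting, the two column generating functions become $\frac{x\phi'G(\phi)}{\phi}\,\phi^k$ and $\frac{x\phi'G(\phi)}{\phi}\,F(\phi)^k$, which exhibit
$$V=\left(\frac{x\phi'G(\phi)}{\phi},\,\phi\right)=\left(\frac{\phi\phi'G(\phi)}{F(\phi)},\,\phi\right),\qquad H=\left(\frac{x\phi'G(\phi)}{\phi},\,F(\phi)\right).$$
Since $\frac{x\phi'}{\phi}=\frac{\phi'}{h(\phi)}$ and $(a,\phi)\cdot(G,F)=\left(a\,G(\phi),F(\phi)\right)$, the expression for $H$ is exactly $\left(\frac{\phi'}{h(\phi)},\phi\right)\cdot(G,F)=\left(\frac{x\phi'}{\phi},\phi\right)\cdot(G,F)$, as claimed.

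Finally, $V$ and $H$ share the same first component $g:=\frac{x\phi'G(\phi)}{\phi}$, and $\phi$ has compositional inverse $\bar\phi(x)=x^2/F(x)$; hence by the group law
$$V^{-1}\cdot H=\left(\frac{1}{g(\bar\phi)},\bar\phi\right)\cdot\left(g,F(\phi)\right)=\left(1,\,F(\phi(\bar\phi))\right)=(1,F(x)).$$
I expect the only genuine difficulty to be the Lagrange-inversion step: setting up the shift $n=m+k$ cleanly and invoking the inversion identity in exactly the right shape. Once $\phi$ is recognised as the solution of $\phi=xh(\phi)$, everything afterwards rests only on the two identities $h(\phi)=\phi/x$ and $\phi'(1-xh'(\phi))=h(\phi)$ and is routine. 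An alternative would be to characterise $V$ and $H$ through their production matrices, but the direct column-by-column computation looks shorter.
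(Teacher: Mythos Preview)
Your argument is correct and complete. Note, however, that in the paper this lemma is \emph{stated without proof}: it is quoted from the cited references \cite{halves, half} on halves of Riordan arrays. So there is no in-paper proof to compare against.

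Your route---extracting the $k$-th column generating function, shifting $n=m+k$, and applying the diagonal form of Lagrange inversion
\[
\sum_{m\ge 0} x^m[y^m]\bigl(B(y)h(y)^m\bigr)=\frac{B(\phi)}{1-xh'(\phi)}\qquad\text{with }\phi=xh(\phi),
\]
---is exactly the standard method used in those references. The identifications $h(\phi)=\phi/x$, $F(\phi)=\phi^2/x$, and $\phi'(1-xh'(\phi))=h(\phi)$ are handled cleanly, and the final step $V^{-1}\cdot H=(1,F)$ drops out immediately because $V$ and $H$ share the same first component. One small point worth making explicit: $h(0)=1$ (since $F(x)=x+f_2x^2+\cdots$), which is what guarantees the Lagrange-inversion step is legitimate; you use this implicitly but it is built into the Riordan-group hypotheses, so no harm done.
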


\section{The transformation $\mathbf{C}$}
In order to define the transformation $\mathbf{C}$, we begin with an example of its construction.
\begin{example} In this example, we shall calculate the transform of the sequence $1,1,1,\ldots$ with generating function $g(x)=\frac{1}{1-x}$.
To start off, we form the Riordan array $(g(x), x)$, an element of the Appell subgroup. In this case, this corresponds to the matrix that begins
$$\left(
\begin{array}{cccccccc}
 1 & 0 & 0 & 0 & 0 & 0 & 0 & 0 \\
 1 & 1 & 0 & 0 & 0 & 0 & 0 & 0 \\
 1 & 1 & 1 & 0 & 0 & 0 & 0 & 0 \\
 1 & 1 & 1 & 1 & 0 & 0 & 0 & 0 \\
 1 & 1 & 1 & 1 & 1 & 0 & 0 & 0 \\
 1 & 1 & 1 & 1 & 1 & 1 & 0 & 0 \\
 1 & 1 & 1 & 1 & 1 & 1 & 1 & 0 \\
 1 & 1 & 1 & 1 & 1 & 1 & 1 & 1 \\
\end{array}
\right).$$
We then take the inverse of this matrix, giving $(1-x, x)$, which begins
$$\left(
\begin{array}{cccccccc}
 1 & 0 & 0 & 0 & 0 & 0 & 0 & 0 \\
 -1 & 1 & 0 & 0 & 0 & 0 & 0 & 0 \\
 0 & -1 & 1 & 0 & 0 & 0 & 0 & 0 \\
 0 & 0 & -1 & 1 & 0 & 0 & 0 & 0 \\
 0 & 0 & 0 & -1 & 1 & 0 & 0 & 0 \\
 0 & 0 & 0 & 0 & -1 & 1 & 0 & 0 \\
 0 & 0 & 0 & 0 & 0 & -1 & 1 & 0 \\
 0 & 0 & 0 & 0 & 0 & 0 & -1 & 1 \\
\end{array}
\right).$$

We now take the binomial transform of this matrix. This involves multiplying by the Riordan array $\left(\frac{1}{1-x}, \frac{x}{1-x}\right)$, with representation $\left(\binom{n}{k}\right)$, the binomial matrix (Pascal's triangle).
We have
$$\left(\frac{1}{1-x}, \frac{x}{1-x}\right)\cdot (1-x, x)=\left(\frac{1}{1-x}\left(1-\frac{x}{1-x}\right), \frac{x}{1-x}\right)=\left(\frac{1-2x}{(1-x)^2}, \frac{x}{1-x}\right).$$ The corresponding matrix begins
$$\left(
\begin{array}{cccccccc}
 1 & 0 & 0 & 0 & 0 & 0 & 0 & 0 \\
 0 & 1 & 0 & 0 & 0 & 0 & 0 & 0 \\
 -1 & 1 & 1 & 0 & 0 & 0 & 0 & 0 \\
 -2 & 0 & 2 & 1 & 0 & 0 & 0 & 0 \\
 -3 & -2 & 2 & 3 & 1 & 0 & 0 & 0 \\
 -4 & -5 & 0 & 5 & 4 & 1 & 0 & 0 \\
 -5 & -9 & -5 & 5 & 9 & 5 & 1 & 0 \\
 -6 & -14 & -14 & 0 & 14 & 14 & 6 & 1 \\
\end{array}
\right).$$
The central elements (that is, the $(2n,n)$-th elements) of this matrix are the Catalan numbers $1,1,2,5,14,\ldots$ or $C_n=\frac{1}{n+1}\binom{2n}{n}$ with generating function $c(x)=\frac{1-\sqrt{1-4x}}{2x}$.
We say that the $\mathbf{C}$ transform of the sequence $1,1,1,\ldots$ is this central sequence, that is we have
$$1,1,1,\ldots \xlongrightarrow{\mathbf{C}} 1,1,2,5,14,\ldots,$$ or, in terms of generating functions,
$$\frac{1}{1-x} \xlongrightarrow{\mathbf{C}} c(x)=\frac{1-\sqrt{1-4x}}{2x}.$$

Note that the vertical half of this last matrix is the matrix $(c(x), xc(x))$ that begins
$$V=\left(
\begin{array}{ccccccc}
 1 & 0 & 0 & 0 & 0 & 0 & 0 \\
 1 & 1 & 0 & 0 & 0 & 0 & 0 \\
 2 & 2 & 1 & 0 & 0 & 0 & 0 \\
 5 & 5 & 3 & 1 & 0 & 0 & 0 \\
 14 & 14 & 9 & 4 & 1 & 0 & 0 \\
 42 & 42 & 28 & 14 & 5 & 1 & 0 \\
 132 & 132 & 90 & 48 & 20 & 6 & 1 \\
\end{array}
\right)$$  while the horizontal half is
$$(c(x), x c(x)^2)$$ which begins
$$H=\left(
\begin{array}{ccccccc}
 1 & 0 & 0 & 0 & 0 & 0 & 0 \\
 1 & 1 & 0 & 0 & 0 & 0 & 0 \\
 2 & 3 & 1 & 0 & 0 & 0 & 0 \\
 5 & 9 & 5 & 1 & 0 & 0 & 0 \\
 14 & 28 & 20 & 7 & 1 & 0 & 0 \\
 42 & 90 & 75 & 35 & 9 & 1 & 0 \\
 132 & 297 & 275 & 154 & 54 & 11 & 1 \\
\end{array}
\right).$$
The $\mathbf{C}$ transform of the sequence $1,1,1,\ldots$, or $[x^n]\frac{1}{1-x}$, which is $C_n$, is thus given by the first column of both $H$ and $V$.
\end{example}
A constructive process to arrive at the $\mathbf{C}$ transform of a sequence whose generating function is $g(x)$ is thus as follows.
\begin{itemize}
\item Form the Appell element $(g(x), x)$.
\item Take its inverse to get $\left(\frac{1}{g(x)}, x\right)$.
\item Form the product $\left(\frac{1}{1-x}, \frac{x}{1-x}\right)\cdot \left(\frac{1}{g(x)},x\right)=\left(\frac{1}{1-x}\frac{1}{g\left(\frac{x}{1-x}\right)}, \frac{x}{1-x}\right)$.
\item The $\mathbf{C}$ transform we seek is then the initial column of the half matrices (vertical and horizontal) of $\left(\frac{1}{1-x}\frac{1}{g\left(\frac{x}{1-x}\right)}, \frac{x}{1-x}\right)$.
\end{itemize}
\begin{proposition} The $\mathbf{C}$ transform of $g(x)$ is given by
$$\mathbf{C}(g(x))=\frac{1}{\sqrt{1-4x}g(xc(x)^2)}.$$
\end{proposition}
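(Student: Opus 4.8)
The plan is to feed the Riordan array produced by the constructive recipe directly into the vertical-half formula of the Lemma in Section~2, and then to collapse the resulting first column using the standard functional identities for the Catalan generating function $c(x)$. So I would set $M=(G(x),F(x))$ with
$$G(x)=\frac{1}{1-x}\cdot\frac{1}{g\!\left(\frac{x}{1-x}\right)},\qquad F(x)=\frac{x}{1-x},$$
this being exactly the array reached in the third bullet of the construction, so that $\mathbf{C}(g(x))$ is by definition the first column of the vertical (equivalently horizontal) half of $M$.

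First I would compute the auxiliary series $\phi(x)=\mathrm{Rev}\!\left(x^2/F(x)\right)=\mathrm{Rev}\!\left(x(1-x)\right)=\mathrm{Rev}(x-x^2)$. This is $\phi(x)=xc(x)$: writing $u=xc(x)$, the Catalan equation $c=1+xc^2$ rearranges to $u-u^2=x$, so $u$ reverts $x-x^2$. Differentiating $\phi-\phi^2=x$ gives $\phi'(1-2\phi)=1$, and since $2xc(x)=1-\sqrt{1-4x}$ we have $1-2\phi=\sqrt{1-4x}$, hence $\phi'(x)=1/\sqrt{1-4x}$.

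By the Lemma, the vertical half of $M$ is $V=\left(\dfrac{x\,\phi'(x)\,G(\phi(x))}{\phi(x)},\,\phi(x)\right)$, so the sought generating function is $\dfrac{x\phi'(x)}{\phi(x)}\,G(xc(x))=\dfrac{G(xc(x))}{\sqrt{1-4x}\,c(x)}$. It then remains only to evaluate $G(xc(x))$, and here I would use the companion identities $1-xc(x)=1/c(x)$ (immediate from $c-xc^2=1$) and consequently $\dfrac{xc(x)}{1-xc(x)}=xc(x)^2$. Substituting these into the definition of $G$ yields $G(xc(x))=c(x)\cdot\dfrac{1}{g(xc(x)^2)}$, and the factor $c(x)$ cancels, leaving $\mathbf{C}(g(x))=\dfrac{1}{\sqrt{1-4x}\,g(xc(x)^2)}$ as claimed. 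Running the same substitutions through the horizontal half $H=\left(\tfrac{x\phi'}{\phi},\phi\right)\cdot(G,F)$, whose first component $\tfrac{x\phi'}{\phi}G(\phi)$ coincides with that of $V$ — as it must, since $V^{-1}H=(1,F)$ fixes the first column — gives an immediate consistency check. There is no real obstacle: the only care needed is in the Catalan algebra, namely correctly identifying $\mathrm{Rev}(x-x^2)=xc(x)$, computing $\phi'$, and spotting that $x/(1-x)$ evaluated at $x=xc(x)$ simplifies to $xc(x)^2$.
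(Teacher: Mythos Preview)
Your proof is correct and follows essentially the same route as the paper: you apply the vertical-half formula from the Lemma in Section~2 to the array $\left(\frac{1}{1-x}\,\frac{1}{g(x/(1-x))},\,\frac{x}{1-x}\right)$, identify $\phi(x)=xc(x)$, and simplify using the Catalan identities $1-xc(x)=1/c(x)$ and $\frac{xc(x)}{1-xc(x)}=xc(x)^2$ so that the factor $c(x)$ cancels. Your version adds a little more explicit justification (the implicit differentiation for $\phi'$ and the reversion argument for $xc(x)$) and the horizontal-half consistency remark, but the argument is the same.
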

\begin{proof} We let
$$(G(x), F(x))=\left(\frac{1}{1-x}\frac{1}{g\left(\frac{x}{1-x}\right)}, \frac{x}{1-x}\right).$$
Then we have
$$\phi(x)=\text{Rev}\left(\frac{x^2}{\frac{x}{1-x}}\right)=\text{Rev}(x(1-x))=xc(x).$$
Then $$\frac{x\phi'(x)}{\phi(x)}=\frac{x (x c(x))'}{x c(x)}=\frac{1}{c(x)\sqrt{1-4x}}.$$
We also have
$$G(\phi(x))=\frac{1}{1-xc(x)}\frac{1}{g\left(\frac{xc(x)}{1-xc(x)}\right)}=c(x)\frac{1}{g(x c(x)^2)},$$ where we have used the equality $c(x)=\frac{1}{1-xc(x)}$.
Thus we obtain
$$\frac{x \phi'(x) G(\phi(x))}{\phi(x)}=\frac{1}{c(x)\sqrt{1-4x}}.c(x)\frac{1}{g(x c(x)^2)}=\frac{1}{\sqrt{1-4x}g(xc(x)^2)},$$
as required.
\end{proof}
If $a_n$ is the sequence with generating function $g(x)$, we let $a_n^*$ denote the sequence with generating function $\frac{1}{g}$.
\begin{corollary} The $\mathbf{C}$ transform of the sequence $a_n$ is given by $b_n$ where
$$ b_n = \sum_{k=0}^n \binom{2n}{n-k}a_k^*.$$
\end{corollary}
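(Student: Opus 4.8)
The plan is to read the result off from the Proposition together with the fundamental theorem of Riordan arrays. Write $g^\ast(x)=1/g(x)$ for the generating function of $a_n^\ast$. Then the Proposition says
$$\mathbf{C}(g(x))=\frac{1}{\sqrt{1-4x}}\cdot\frac{1}{g(xc(x)^2)}=\frac{1}{\sqrt{1-4x}}\,g^\ast(xc(x)^2),$$
and the right-hand side is precisely the generating function obtained when the Riordan array $T=\left(\frac{1}{\sqrt{1-4x}},\,xc(x)^2\right)$ acts on the sequence $a_n^\ast$, by the fundamental theorem. So it suffices to identify the general term of $T$: once we know that $T=(t_{n,k})$ with $t_{n,k}=\binom{2n}{n-k}$, the matrix-vector form of the action gives $b_n=\sum_{k=0}^n t_{n,k}a_k^\ast=\sum_{k=0}^n\binom{2n}{n-k}a_k^\ast$.

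The computation of $t_{n,k}$ amounts to expanding the $k$-th column generating function:
$$t_{n,k}=[x^n]\frac{1}{\sqrt{1-4x}}\bigl(xc(x)^2\bigr)^k=[x^{n-k}]\frac{c(x)^{2k}}{\sqrt{1-4x}}.$$
For this I would use the identity
$$\frac{c(x)^m}{\sqrt{1-4x}}=\sum_{j\ge 0}\binom{2j+m}{j}x^j,\qquad m\ge 0,$$
which can be obtained by Lagrange inversion, or by a short induction on $m$: the case $m=0$ is the classical expansion $\frac{1}{\sqrt{1-4x}}=\sum_j\binom{2j}{j}x^j$, and the inductive step uses $c(x)=\frac{1-\sqrt{1-4x}}{2x}$ together with the standard expansion of $c(x)^m$ and a one-line binomial simplification. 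Taking $m=2k$ and reading off the coefficient of $x^{n-k}$ yields $t_{n,k}=\binom{2(n-k)+2k}{n-k}=\binom{2n}{n-k}$, as wanted.

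Putting the pieces together, the fundamental theorem gives $T\cdot g^\ast(x)=\frac{1}{\sqrt{1-4x}}g^\ast(xc(x)^2)=\mathbf{C}(g(x))$, which in coordinates is exactly $b_n=\sum_{k=0}^n\binom{2n}{n-k}a_k^\ast$; note that $\binom{2n}{n-k}$ vanishes for $k>n$, so truncating the sum at $k=n$ costs nothing, consistent with $T$ being lower-triangular. The only genuine work is the column-generating-function identity above; recognizing $\mathbf{C}(g(x))$ as the action of the single fixed Riordan array $T$ is the conceptual point, and everything after that is bookkeeping. The step I expect to be the main obstacle is establishing $[x^{n-k}]c(x)^{2k}/\sqrt{1-4x}=\binom{2n}{n-k}$ cleanly, i.e.\ pinning down the entries of $T$, since the rest follows formally from results already in hand.
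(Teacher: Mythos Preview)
Your proof is correct and follows essentially the same approach as the paper: recognize $\mathbf{C}(g(x))$ from the Proposition as the action of the Riordan array $\left(\frac{1}{\sqrt{1-4x}},\,xc(x)^2\right)$ on $\frac{1}{g(x)}$ via the fundamental theorem, and then identify the general entry of that array as $\binom{2n}{n-k}$. The paper simply asserts the general term without justification, so your additional computation via $\frac{c(x)^m}{\sqrt{1-4x}}=\sum_{j\ge 0}\binom{2j+m}{j}x^j$ fills in a detail the paper leaves to the reader.
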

\begin{proof} Using the fundamental theorem of Riordan arrays, we have
$$\frac{1}{\sqrt{1-4x}g(xc(x)^2)}=\left(\frac{1}{\sqrt{1-4x}}, xc(x)^2\right)\cdot \frac{1}{g(x)}.$$
The Riordan array $\left(\frac{1}{\sqrt{1-4x}}, xc(x)^2\right)$ has general term $\binom{2n}{n-k}$.
\end{proof}
\begin{corollary} If $h(x)=\mathbf{C}(g(x))$ then we have
$$g(x)=\frac{1}{\frac{1-x}{1+x}h\left(\frac{x}{(1+x)^2}\right)}.$$
\end{corollary}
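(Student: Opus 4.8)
The plan is to invert the identity from the Proposition rather than manipulate $h$ directly. Writing $f(x)=xc(x)^2$, the first Corollary records that $h(x)=\left(\frac{1}{\sqrt{1-4x}},\,f(x)\right)\cdot \frac{1}{g(x)}$, where the product is understood via the fundamental theorem of Riordan arrays applied to the generating function $\frac{1}{g(x)}$. Since matrix multiplication is associative and the matrix $\left(\frac{1}{\sqrt{1-4x}},\,f(x)\right)$ is invertible, this gives $\frac{1}{g(x)}=\left(\frac{1}{\sqrt{1-4x}},\,f(x)\right)^{-1}\cdot h(x)$. It therefore suffices to compute the inverse Riordan array $\left(\frac{1}{\sqrt{1-4x}},\,xc(x)^2\right)^{-1}=\left(\sqrt{1-4\bar{f}(x)},\,\bar{f}(x)\right)$ explicitly, and then read off $\frac{1}{g(x)}$ by a single application of the fundamental theorem.

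First I would determine $\bar{f}(x)=\text{Rev}(xc(x)^2)$. Using the identity $c(x)=\frac{1}{1-xc(x)}$ (already used in the proof of the Proposition) one has $xc(x)^2=\frac{xc(x)}{1-xc(x)}$, so setting $w=xc(x)$ — which satisfies $w=\text{Rev}(x(1-x))$, that is $x=w(1-w)$ — and $y=xc(x)^2=\frac{w}{1-w}$, we get $w=\frac{y}{1+y}$ and hence $x=w(1-w)=\frac{y}{1+y}\cdot\frac{1}{1+y}=\frac{y}{(1+y)^2}$. Thus $\bar{f}(x)=\frac{x}{(1+x)^2}$.

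Next I would compute the first component of the inverse array: $\sqrt{1-4\bar{f}(x)}=\sqrt{1-\frac{4x}{(1+x)^2}}=\sqrt{\frac{(1+x)^2-4x}{(1+x)^2}}=\sqrt{\frac{(1-x)^2}{(1+x)^2}}=\frac{1-x}{1+x}$, taking the branch that is $1$ at $x=0$. Hence $\left(\frac{1}{\sqrt{1-4x}},\,xc(x)^2\right)^{-1}=\left(\frac{1-x}{1+x},\,\frac{x}{(1+x)^2}\right)$. Applying the fundamental theorem of Riordan arrays to $h(x)$ then gives $\frac{1}{g(x)}=\frac{1-x}{1+x}\,h\!\left(\frac{x}{(1+x)^2}\right)$, and taking reciprocals yields the stated formula.

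None of the steps is genuinely difficult; the one requiring the right idea is the computation of $\text{Rev}(xc(x)^2)$, where the substitution $w=xc(x)$ converts an otherwise awkward reversion into the two elementary substitutions above, and the companion simplification $\sqrt{1-4\bar{f}(x)}=\frac{1-x}{1+x}$ then falls out as a perfect square. As a cross-check I would, if time permits, verify directly that feeding $\frac{1-x}{1+x}\,h\!\left(\frac{x}{(1+x)^2}\right)$ back into $\mathbf{C}$ via the Proposition recovers $h(x)$, using the same substitution in reverse.
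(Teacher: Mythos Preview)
Your proof is correct and follows the same approach as the paper: both rely on the identity $\left(\frac{1}{\sqrt{1-4x}},\, xc(x)^2\right)^{-1}=\left(\frac{1-x}{1+x},\, \frac{x}{(1+x)^2}\right)$ and then apply the fundamental theorem of Riordan arrays to recover $\frac{1}{g(x)}$. The only difference is that the paper simply states this inverse, whereas you carry out the reversion of $xc(x)^2$ and the simplification of $\sqrt{1-4\bar f(x)}$ explicitly.
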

\begin{proof} This follows since we have
$$\left(\frac{1}{\sqrt{1-4x}}, xc(x)^2\right)^{-1}=\left(\frac{1-x}{1+x}, \frac{x}{(1+x)^2}\right).$$
\end{proof}
The array with general term $\binom{2n}{n-k}$ begins
$$\left(
\begin{array}{cccccc}
 1 & 0 & 0 & 0 & 0 & 0 \\
 2 & 1 & 0 & 0 & 0 & 0 \\
 6 & 4 & 1 & 0 & 0 & 0 \\
 20 & 15 & 6 & 1 & 0 & 0 \\
 70 & 56 & 28 & 8 & 1 & 0 \\
 252 & 210 & 120 & 45 & 10 & 1 \\
\end{array}
\right).$$
This is \seqnum{A094527}.
\begin{corollary} With the assumptions of the last corollary, we have that $a_n$ is the reciprocal sequence to the sequence
$$a_n^*=\sum_{k=0}^n (-1)^{n-k} \frac{2n+0^n}{n+k+0^{n+k}}\binom{n+k}{2k}b_k.$$
\end{corollary}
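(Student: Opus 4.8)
The plan is to read the claim off the previous corollary via the fundamental theorem of Riordan arrays, the only real work being the identification of one Riordan array's general term. Since, by definition, $a_n$ has generating function $g(x)$ while $a_n^*$ has generating function $\frac{1}{g(x)}$, the assertion that ``$a_n$ is the reciprocal sequence to $a_n^*$'' is merely a restatement of these two definitions; so everything reduces to showing that the displayed sum produces the coefficient sequence of $\frac{1}{g(x)}$ from the coefficient sequence $b_k$ of $h(x)=\mathbf{C}(g(x))$.

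By the last corollary, $\frac{1}{g(x)}=\frac{1-x}{1+x}\,h\!\left(\frac{x}{(1+x)^2}\right)$, and the fundamental theorem of Riordan arrays rewrites the right-hand side as $\left(\frac{1-x}{1+x},\frac{x}{(1+x)^2}\right)\cdot h(x)$. Writing $T_{n,k}$ for the $(n,k)$-entry of this Riordan array, we get $a_n^*=\sum_{k=0}^n T_{n,k}\,b_k$, so it remains only to verify $T_{n,k}=(-1)^{n-k}\frac{2n+0^n}{n+k+0^{n+k}}\binom{n+k}{2k}$. (One could instead invoke the already-recorded fact that $\left(\frac{1-x}{1+x},\frac{x}{(1+x)^2}\right)$ is the inverse of $\left(\frac{1}{\sqrt{1-4x}},xc(x)^2\right)$, whose general term $\binom{2n}{n-k}$ was identified above, and then pass to that inverse; but extracting $T_{n,k}$ head-on is shorter.)

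For the head-on computation, $T_{n,k}=[x^n]\frac{1-x}{1+x}\left(\frac{x}{(1+x)^2}\right)^k=[x^{n-k}]\frac{1-x}{(1+x)^{2k+1}}$. Using $\frac{1}{(1+x)^{2k+1}}=\sum_{m\ge 0}(-1)^m\binom{2k+m}{m}x^m$ and separating the contributions of $1$ and of $-x$, one finds, for $n>k$, that $T_{n,k}=(-1)^{n-k}\bigl[\binom{n+k}{n-k}+\binom{n+k-1}{n-k-1}\bigr]=(-1)^{n-k}\bigl[\binom{n+k}{2k}+\binom{n+k-1}{2k}\bigr]$; the closed form then follows from $\binom{n+k-1}{2k}=\frac{n-k}{n+k}\binom{n+k}{2k}$, which gives $\binom{n+k}{2k}+\binom{n+k-1}{2k}=\frac{2n}{n+k}\binom{n+k}{2k}$. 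Finally one checks the degenerate rows: $T_{n,n}=[x^0]\frac{1-x}{(1+x)^{2n+1}}=1$, and $T_{n,0}=[x^n]\frac{1-x}{1+x}$ equals $2(-1)^n$ for $n\ge 1$ and $1$ for $n=0$ — which is exactly what the correction terms $0^n$ and $0^{n+k}$ in the stated formula encode.

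The computation is essentially routine. The one place demanding attention is matching the ``raw'' binomial output $\binom{n+k}{2k}+\binom{n+k-1}{2k}$, together with the degenerate $n=0$ and $k=0$ entries, to the normalized form $\frac{2n+0^n}{n+k+0^{n+k}}\binom{n+k}{2k}$; that bookkeeping is where I expect to spend most of the write-up.
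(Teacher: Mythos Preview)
Your proof is correct and follows the same approach as the paper: both use the previous corollary to rewrite $\frac{1}{g(x)}=\left(\frac{1-x}{1+x},\frac{x}{(1+x)^2}\right)\cdot h(x)$ and then identify the general term of that Riordan array. The paper simply asserts that this general term is $(-1)^{n-k}\frac{2n+0^n}{n+k+0^{n+k}}\binom{n+k}{2k}$, whereas you actually carry out the coefficient extraction and the bookkeeping for the degenerate cases---so your version is more complete, but not different in method.
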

\begin{proof} This follows since the general term of the array $\left(\frac{1-x}{1+x}, \frac{x}{(1+x)^2}\right)$ is given by $$(-1)^{n-k} \frac{2n+0^n}{n+k+0^{n+k}}\binom{n+k}{2k}.$$
\end{proof}
The Riordan array $\left(\frac{1-x}{1+x}, \frac{x}{(1+x)^2}\right)$ begins
$$\left(
\begin{array}{ccccccc}
 1 & 0 & 0 & 0 & 0 & 0 & 0 \\
 -2 & 1 & 0 & 0 & 0 & 0 & 0 \\
 2 & -4 & 1 & 0 & 0 & 0 & 0 \\
 -2 & 9 & -6 & 1 & 0 & 0 & 0 \\
 2 & -16 & 20 & -8 & 1 & 0 & 0 \\
 -2 & 25 & -50 & 35 & -10 & 1 & 0 \\
 2 & -36 & 105 & -112 & 54 & -12 & 1 \\
\end{array}
\right).$$ This is \seqnum{A110162}.
\begin{corollary}
We have $$\mathbf{C}(g(x))=(1, xc(x))\cdot \frac{1}{(1-2x)g\left(\frac{x}{1-x}\right)}.$$
\end{corollary}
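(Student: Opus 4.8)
The plan is to start from the closed form already established in the Proposition, namely $\mathbf{C}(g(x))=\dfrac{1}{\sqrt{1-4x}\,g(xc(x)^2)}$, and show the claimed right-hand side reduces to exactly this. First I would set $h(x)=\dfrac{1}{(1-2x)\,g\!\left(\frac{x}{1-x}\right)}$ and note that this is a well-defined power series with $h(0)=1$, so that the Riordan action $(1,xc(x))\cdot h(x)$ makes sense. By the fundamental theorem of Riordan arrays, $(1,xc(x))\cdot h(x)=h(xc(x))$, which equals
$$\frac{1}{\bigl(1-2xc(x)\bigr)\,g\!\left(\dfrac{xc(x)}{1-xc(x)}\right)}.$$

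Next I would simplify the two ingredients of this expression using the standard Catalan identities. From $c(x)=\dfrac{1}{1-xc(x)}$ one gets $\dfrac{xc(x)}{1-xc(x)}=xc(x)\cdot c(x)=xc(x)^2$, so the argument of $g$ becomes $xc(x)^2$, matching the Proposition. For the prefactor, from $xc(x)=\dfrac{1-\sqrt{1-4x}}{2}$ we obtain $2xc(x)=1-\sqrt{1-4x}$, hence $1-2xc(x)=\sqrt{1-4x}$. Substituting both simplifications gives
$$(1,xc(x))\cdot h(x)=\frac{1}{\sqrt{1-4x}\,g(xc(x)^2)}=\mathbf{C}(g(x)),$$
which is the desired identity.

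I do not expect a genuine obstacle here: the result is essentially a repackaging of the Proposition's formula via the fundamental theorem, and the only thing to be careful about is the order of operations — one must feed $\frac{1}{(1-2x)g(x/(1-x))}$ (not $g$ itself) into the array $(1,xc(x))$, and then invoke $c(x)=1/(1-xc(x))$ to collapse the nested substitution $\frac{x}{1-x}\big|_{x\mapsto xc(x)}$ into $xc(x)^2$. If a self-contained derivation is preferred over citing the Proposition, one can instead observe that $\left(\frac{1}{\sqrt{1-4x}},xc(x)^2\right)=(1,xc(x))\cdot\left(\frac{1}{1-2x},\frac{x}{1-x}\right)$ as Riordan arrays (check the $g$-parts: $\frac{1}{1-2xc(x)}=\frac{1}{\sqrt{1-4x}}$, and the $f$-parts: $\frac{xc(x)}{1-xc(x)}=xc(x)^2$), and then apply this factorisation to the series $\frac{1}{g(x)}$ using Corollary via $\left(\frac{1}{\sqrt{1-4x}},xc(x)^2\right)\cdot\frac{1}{g(x)}=\mathbf{C}(g(x))$.
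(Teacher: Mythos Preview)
Your proof is correct and in substance identical to the paper's: the paper simply records the Riordan factorisation $\left(\frac{1}{\sqrt{1-4x}}, xc(x)^2\right)=(1, xc(x))\cdot \left(\frac{1}{1-2x},\frac{x}{1-x}\right)$ and applies it to $\frac{1}{g(x)}$, which is exactly the alternative you sketch at the end, and whose verification amounts to the same two Catalan identities ($1-2xc(x)=\sqrt{1-4x}$ and $\frac{xc(x)}{1-xc(x)}=xc(x)^2$) that drive your main argument. The only difference is presentational---you substitute first and then simplify, the paper factors first and then substitutes---but the content is the same.
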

\begin{proof}
This follows since
$$\left(\frac{1}{\sqrt{1-4x}}, xc(x)^2\right)=(1, xc(x))\cdot \left(\frac{1}{1-2x},\frac{x}{1-x}\right).$$
\end{proof}
The array $\left(\frac{1}{1-2x},  \frac{x}{1-x}\right)$ is \seqnum{A055248} with general term
$$\sum_{j=0}^{n-k} \binom{k+j-1}{j}2^{n-k-j}= \sum_{j=0}^n \binom{n}{k+j}=\sum_{j=k}^n \binom{n}{j}.$$
The last equality arises since
$$\left(\frac{1}{1-2x},\frac{x}{1-x}\right)=\left(\frac{1}{1-x},\frac{x}{1-x}\right)\cdot \left(\frac{1}{1-x}, x\right).$$
Thus applying the matrix $\left(\frac{1}{1-2x},\frac{x}{1-x}\right)$ to a sequence returns the binomial transform of the partial sums of the original sequence. Thus the $\mathbf{C}$ transform returns the Catalan transform of the binomial transform of the partial sums of the reciprocal of the original sequence. We have
$$\mathbf{C}(g(x))=(1, xc(x))\cdot \left(\frac{1}{1-x},\frac{x}{1-x}\right)\cdot \left( \frac{1}{1-x},x\right)\cdot \frac{1}{g(x)}.$$
We have that $$(1, xc(x))\cdot \left(\frac{1}{1-x},\frac{x}{1-x}\right)=(c(x), xc(x)^2)$$ and thus
$$\mathbf{C}(g(x))=(c(x), xc(x)^2)\cdot \frac{1}{(1-x)g(x)}.$$ The array $(c(x), xc(x)^2)$ is \seqnum{A039599}, with general term $\frac{2k+1}{n+k+1}\binom{2n}{n-k}$.
Thus we have
$$b_n = \sum_{k=0}^n \frac{2k+1}{n+k+1}\binom{2n}{n-k} \sum_{i=0}^k  a_i^*.$$

\section{Some simple transforms}
\begin{example}
We show below some simple transforms. We include in the third column the Hankel transform of the transformed sequence.
\begin{center}
\begin{tabular}{|c|c|c|}
\hline $a_n$ & $b_n$  & $h_n$\\
\hline $(-1)^n$ & $\binom{2n+1}{n+1}$ & $1,1,1,\ldots$ \\
\hline $0^n$ & $\binom{2n}{n}$ & $2^n$  \\
\hline $1,1,1,\ldots$ & $C_n$ & $1,1,1,\ldots$  \\
\hline $2^n$ & $-0^n-\binom{2n-1}{n+1}$ & $2^{n+1}\cos\left(-\frac{\pi (n+1)}{3}\right)$ \\ \hline
\end{tabular}
\end{center}
We show the equivalent information below, this time in terms of generating functions.
\begin{center}
\begin{tabular}{|c|c|c|}
\hline $g(x)$ & $\mathbf{C}(g(x))$  & $H(x)$\\
\hline $\frac{1}{1+x}$ & $\frac{1+xc(x)^2}{\sqrt{1-4x}}$ & $\frac{1}{1-x}$ \\
\hline $1$ & $\frac{1}{\sqrt{1-4x}}$ & $\frac{1}{1-2x}$  \\
\hline $\frac{1}{1-x}$ & $c(x)$ & $\frac{1}{1-x}$  \\
\hline $\frac{1}{1-2x}$ & $\frac{-1+3x+\sqrt{1-4x}}{x\sqrt{1-4x}} $ & $\frac{1-4x}{1-2x+4x^2}$ \\ \hline
\end{tabular}
\end{center}
In the final table of this section, we show some further examples of the transform $\mathbf{C}$ acting on the displayed sequences, along with the Hankel transforms of the image sequences.
\begin{center}
\begin{tabular}{|c|c|c|}
\hline $g(x)$ & $\mathbf{C}(g(x))$  & $H(x)$\\
\hline $\frac{1-x}{1+x}$ & $\frac{1}{1-4x}$  & $1$\\
\hline $\frac{1-2x}{1+x}$ & $\frac{1+3\sqrt{1-4x}}{2\sqrt{1-4x}(2-9x)}$ & $\frac{1}{1+x}$ \\
\hline $\frac{1+x}{1-x}$ & $1$ & $1$  \\
\hline $\frac{1+x}{1-x-x^2}$ & $-\sum_{n=0}^{\infty}\binom{2n-1}{n+1}x^n $ & $\frac{1-3x+2x^2-x^3}{(1-x+x^2)^2}$  \\
\hline $\frac{1}{1-x^2}$ & $\sum_{n=0}^{\infty}C_{n+1}x^n=\frac{c(x)-1}{x}$ & $\frac{1}{1-x}$ \\ \hline
\end{tabular}
\end{center}

\end{example}

\section{The family $\frac{1+ ax}{1+bx}$}
We start by considering the transforms of sequences with generating functions of the form
$$g(x)=\frac{1+ ax}{1+bx}.$$
We find that
$$\mathbf{C}(g(x))=\frac{\sqrt{1 - 4 x}(a - b) + 2x(a - 1)(b - 1) + a + b}{2 \sqrt{1 - 4x}(x(a - 1)^2 + a)}.$$
This may be expressed alternatively as
$$\mathbf{C}(g(x))=\frac{1+(b-1)x c(x)}{((1-2 xc(x))(1+(a-1)xc(x))}.$$
We then have the following conjecture regarding the Hankel transforms of the image sequences.
\begin{conjecture}
The Hankel transforms of the $\mathbf{C}$ transform of sequences with generating functions $\frac{1+ ax}{1+bx}$ have their generating functions given by
$$\frac{1-b(a+b)x}{1-2(1+ab)x+(a+b)^2x^2}.$$
\end{conjecture}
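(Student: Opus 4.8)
The plan is to reduce to a continued-fraction computation and then read off the Hankel transform via the classical correspondence between continued fractions and Hankel determinants. The first step is algebraic simplification. Starting from the closed form recorded above, $\mathbf{C}(g(x))=\dfrac{1+(b-1)xc(x)}{(1-2xc(x))(1+(a-1)xc(x))}$, I would introduce $w=xc(x)$, which satisfies $w=x+w^{2}$ (so its continued fraction has all coefficients equal to $1$), and write
$$\mathbf{C}(g(x))=\frac{1+(b-1)w}{(1-2w)\bigl(1+(a-1)w\bigr)},$$
a fixed rational function of $w$; equivalently, with $t=xc(x)^{2}=c(x)-1$, so that $x=t/(1+t)^{2}$ and $\sqrt{1-4x}=(1-t)/(1+t)$, the compact form $\mathbf{C}(g(x))=\dfrac{(1+t)(1+bt)}{(1-t)(1+at)}$. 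Decomposing in partial fractions in $w$ and using the readily checked identity $\dfrac{1}{1-\lambda xc(x)}=\dfrac{(\lambda-2)+\lambda\sqrt{1-4x}}{2(\lambda-1-\lambda^{2}x)}$, one gets $\mathbf{C}(g(x))=R_{1}(x)+R_{2}(x)/\sqrt{1-4x}$ with $R_{1},R_{2}$ explicit rational functions whose denominators are at most linear in $x$. Thus $\mathbf{C}(g(x))$ is assembled from the central binomial generating function $1/\sqrt{1-4x}$ by multiplication by a linear factor, division by a linear factor, addition of a geometric series, and rescaling.

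Each of these operations has a controlled effect on the sequence and on its Hankel determinants, and the proof hinges on computing the Jacobi continued fraction
$$\mathbf{C}(g(x))=\cfrac{1}{1-\alpha_{0}x-\cfrac{\beta_{1}x^{2}}{1-\alpha_{1}x-\cfrac{\beta_{2}x^{2}}{1-\alpha_{2}x-\cdots}}}$$
since by the classical formula (see \cite{Hankel}) the Hankel transform is then $h_{n}=\prod_{i=1}^{n}\beta_{i}^{\,n+1-i}$ and depends only on the $\beta_{i}$. I would obtain the $\beta_{i}$ either directly, via the quotient-difference algorithm on the explicit series, or by tracking the known continued fraction of $1/\sqrt{1-4x}$ (with $\alpha_{i}=2$ for all $i$, $\beta_{1}=2$ and $\beta_{i}=1$ for $i\ge 2$) through the construction above: multiplication and division by linear factors are Christoffel and Geronimus transformations, while adding a geometric series adds a rank-one matrix to each Hankel matrix and is handled by the matrix determinant lemma together with the orthogonal polynomials of the unperturbed sequence. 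The computation then concludes by telescoping: one verifies that $\prod_{i=1}^{n}\beta_{i}=h_{n}/h_{n-1}$ is the ratio forced by the conjectured generating function $\dfrac{1-b(a+b)x}{1-2(1+ab)x+(a+b)^{2}x^{2}}$, equivalently that the resulting $h_{n}$ satisfy $h_{0}=1$, $h_{1}=2+ab-b^{2}$ and the recurrence $h_{n}=2(1+ab)h_{n-1}-(a+b)^{2}h_{n-2}$.

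The step I expect to be the main obstacle is the bookkeeping just described. In general $\mathbf{C}(g(x))$ is not positive-definite --- its Hankel minors change sign --- so its orthogonal polynomials are not eventually Chebyshev and the $\beta_{i}$ are not eventually constant or periodic (already for $g(x)=1+3x$ the $\beta_{i}$ oscillate wildly), so there is no shortcut via periodicity of the continued fraction; one must carry the exact and rather unwieldy expressions for the $\beta_{i}$ --- or for the orthogonal-polynomial values entering the rank-one perturbation --- through to the final telescoping identity, a calculation in which sign errors and off-by-one slips proliferate. A secondary difficulty is that for certain parameter values --- for instance $a=1$, or $a+b=0$, or values where $1-2(1+ab)x+(a+b)^{2}x^{2}$ has a repeated root --- a leading Hankel minor of $\mathbf{C}(g(x))$ vanishes and the ordinary Jacobi continued fraction does not exist; there one should argue by continuity in the parameters from the generic case, or evaluate the relevant determinants directly, noting that the conjectured generating function degenerates accordingly --- to $1$, to a single geometric series, or to a geometric series times a linear factor --- in an easily checked way.
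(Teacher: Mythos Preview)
The paper does not prove this statement. It is explicitly labelled a \emph{conjecture}, and the only support offered in the text is a list of worked examples for particular values of $a$ and $b$; there is no argument, not even a sketch, for general $a,b$. So there is no ``paper's own proof'' to compare against.

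As for your plan: it is a reasonable outline of the standard toolkit (J-fraction, $h_n=\prod_i\beta_i^{\,n+1-i}$, Christoffel/Geronimus for linear factor twists, matrix determinant lemma for rank-one geometric perturbations), and your reduction to a rational function of $w=xc(x)$ or $t=xc(x)^2$ is clean. But you yourself identify the crux and then do not resolve it: once you observe that the $\beta_i$ are generically aperiodic, the product formula gives no closed form directly, and ``carry the exact and rather unwieldy expressions through to the final telescoping identity'' is not a step, it is the whole problem. Concretely, to prove the conjecture along these lines you would need either (i) a closed form for the orthogonal polynomials of $\mathbf{C}(g)$ evaluated at the geometric node, so that the determinant-lemma correction telescopes against the unperturbed Hankel product, or (ii) a direct two-term recurrence for $h_n$ obtained by some other route (e.g.\ Dodgson condensation or a Desnanot--Jacobi identity applied to the explicit Hankel matrix). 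Your proposal names (i) but does not supply the needed closed forms, and does not attempt (ii). The degenerate-parameter discussion at the end is sound in spirit, but it is premature until the generic case is actually established.

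In short: there is nothing in the paper to match, and your write-up is a plausible strategy memo rather than a proof; the missing idea is an explicit mechanism that collapses the aperiodic $\beta_i$ (or the rank-one correction terms) into the claimed second-order constant-coefficient recurrence for $h_n$.
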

We give several examples of this family.
\begin{example}
When $a=-2, b=1$, the expansion of $\frac{1-2x}{1+x}$ will have as image the sequence \seqnum{A141223} with generating function $\frac{1}{\sqrt{1-4x}(1-3xc(x))}=\frac{1}{(1-2xc(x))(1-3xc(x))}$. The Hankel transform of this image is then $h_n=(-1)^n$ with generating function $\frac{1}{1+x}$.
\end{example}
\begin{example}
When $a=1, b=2$, the expansion of $\frac{1+x}{1+2x}$ will have as image the sequence \seqnum{A029651} with generating function $\frac{1+xc(x)}{1-2xc(x)}$. This sequence gives the central elements of the $(1,2)$-Pascal triangle \seqnum{A029635}. The Hankel transform of this image has generating function $\frac{1-6x}{(1-3x)^2}$.

Note that in the special case where $b=a+1$, the generating function of the Hankel transform will have the form
$$H(x)=\frac{1-(a+1)(2a+1)x}{1-2(a(a+1)+1)x+(2a+1)^2x^2}.$$ For example, when $a=-2, b=-1$ we get 
$$H(x)=\frac{1}{1-3x}.$$ In fact, in this case we find that the $\mathbf{C}$ transform of $[x^n] \frac{1-2x}{1-x}$, or
$$1, -1, -1, -1, -1, -1, -1, -1, -1, -1, -1,\ldots$$ is given by \seqnum{A007854}
$$1, 3, 12, 51, 222, 978, 4338, 19323, 86310, 386250,\ldots$$ with generating function 
$$\mathbf{C}\left(\frac{1-2x}{1-x}\right)=\frac{1}{1-3xc(x)}.$$ The Hankel transform of  \seqnum{A007854} is $3^n$. 

We note that the $(1,2)$-Pascal matrix is the almost Riordan array \cite{almost} of the first order defined by
$$\left(\frac{1}{1-x}; \frac{2-x}{(1-x)^2}, \frac{x}{1-x}\right).$$
\end{example}
\begin{example} When $a=1, b=3$, the expansion of $\frac{1+x}{1+3x}$ will have as image the sequence \seqnum{A100320}, with generating function $\frac{1+2xc(x)}{1-2xc(x)}$. As a moment sequence, this has the integral representation
$$b_n=\frac{2}{\pi} \int_{0}^4 \frac{x^n \sqrt{x(4-x)}}{x(4-x)}\,dx-0^n.$$
The Hankel transform of this image has generating function $\frac{1-12x}{(1-4x)^2}$.

We note that in this case, the image sequence is also the image of $[x^n]\frac{1+2x}{1-2x}=1, 4, 8, 16, \ldots$ or \seqnum{A151821} under the Catalan matrix $(1, xc(x))$, or \seqnum{A106566}.
\end{example}
\begin{example} When $a=1, b=4$,  the expansion of $\frac{1+x}{1+4x}$ will have as image the sequence \seqnum{A029609}, with generating function $\frac{5 \sqrt{1-4x}-3(1-4x)}{2{1-4x}}=\frac{1+3xc(x)}{1-2xc(x)}$. The Hankel transform of this image sequence has generating function $\frac{1-20x}{(1-5x)^2}$. The image sequence is the central sequence of the $(2,3)$-Pascal triangle \seqnum{A029600}, which may be defined as the almost Riordan array of first order defined by
$$\left(\frac{1+x}{1-x}; \frac{3-x}{(1-x)^2},\frac{x}{1-x}\right).$$
\end{example}

\section{The family $\frac{1+ax}{1-bx^2}$}
We next consider the transforms of sequences whose generating functions are of the form
$$\frac{1+ax}{1-bx^2}.$$
The $\mathbf{C}$ transform of such a sequence will have its generating function given by
$$\frac{\sqrt{1-4x}(b+x(a(b+1)-2b))+2x^2(a-1)(b-1)+x(4b-a(b-1))-b}{2 \sqrt{1-4x}(x(a-1)^2+a)}.$$
This may be alternatively expressed as
$$\frac{c(x)(1-2xc(x)+(1-b)x^2c(x)^2)}{(1-2xc(x))(1+(a-1)xc(x))}.$$
We then have the following conjecture.
\begin{conjecture} The Hankel transform of the $\mathbf{C}$ transform of the sequence with generating function $\frac{1+ax}{1-bx^2}$ has its generating function given by
$$\frac{1-3bx+b^2(2+b)x^2-b^4x^3}{1-2(1+b)x+(a^2+4b-2a^2b+b^2+a^2b^2)x^2-2b^2(1+b)x^3+b^4x^4}.$$
\end{conjecture}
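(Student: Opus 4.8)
The plan is to obtain the Hankel transform by producing a Jacobi continued fraction for $\mathbf{C}(g(x))$ and then applying the classical product formula for Hankel determinants. The first move is a change of variable: writing $u=xc(x)$, the relation $c(x)=\frac{1}{1-xc(x)}$ gives $c(x)=\frac{1}{1-u}$ and $x=u-u^{2}$, so $u=\frac{1-\sqrt{1-4x}}{2}$, $u=x+u^{2}$, and $\sqrt{1-4x}=1-2u$. In this variable the $c(x)$-form of $\mathbf{C}(g(x))$ displayed in this section becomes the genuine rational function
$$B(x):=\mathbf{C}(g(x))=\frac{1-2u+(1-b)u^{2}}{(1-u)(1-2u)(1+(a-1)u)}.$$
Since $u$ is quadratic over $\mathbb{Q}(x)$, $B(x)$ has the shape $\frac{p(x)+q(x)\sqrt{1-4x}}{r(x)}$ with $p,q,r$ polynomials over $\mathbb{Z}(a,b)$, and hence a Jacobi continued fraction $B(x)=\cfrac{1}{1-\alpha_{0}x-\cfrac{\beta_{1}x^{2}}{1-\alpha_{1}x-\cdots}}$. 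I would first \emph{guess} the coefficients $(\alpha_{n})_{n\ge0}$ and $(\beta_{n})_{n\ge1}$ from a symbolic computation of the first several of them. The feature to expect --- and the reason the conjectured Hankel generating function has a degree-$4$ denominator, whereas the $\frac{1+ax}{1+bx}$ family (for which $B$ is $\frac{1+(b-1)u}{(1-2u)(1+(a-1)u)}$, with a shorter denominator and no $u^{2}$ upstairs) has a degree-$2$ denominator --- is that the factor $\frac{1}{1-bx^{2}}$ forces the tail of the continued fraction to carry the geometric progression $b^{n}$: the $\alpha_{n},\beta_{n}$ are not eventually constant but converge to limits at a $b$-controlled rate, so their closed forms should be rational expressions in $a$, $b$ and $b^{n}$. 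An alternative route to the same expansion uses $\mathbf{C}(g)(x)=\widetilde h(xc(x))$, where $\widetilde h$ is the binomial transform of the \emph{rational} series $\frac{1}{(1-x)g(x)}=\frac{1-bx^{2}}{(1-x)(1+ax)}$: that series has an explicit terminating Jacobi fraction, the binomial transform sends $\alpha_{n}\mapsto\alpha_{n}+1$ and fixes the $\beta_{n}$, and composition with $xc(x)$ acts on the Jacobi fraction by a known interleaving rule, so tracking these three elementary operations yields the expansion directly.

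With a candidate continued fraction in hand I would prove it by a tail fixed-point argument. Let $B_{m}(x)$ be the $m$-th tail, and propose $B_{m}(x)=\frac{p_{m}(x)+q_{m}(x)\sqrt{1-4x}}{r_{m}(x)}$ with $p_{m},q_{m},r_{m}$ of bounded degree whose coefficients are polynomials in $a$, $b$ and one parameter $\theta=b^{m}$. It then suffices to check that $B_{0}=B$ and that the defining recursion $B_{m}=\cfrac{1}{1-\alpha_{m}x-\beta_{m+1}x^{2}B_{m+1}}$ holds; substituting the proposed forms, clearing denominators and replacing $(\sqrt{1-4x})^{2}$ by $1-4x$, the latter collapses to a polynomial identity in $x$, $a$, $b$, $\theta$ --- a finite verification. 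Equivalently one verifies that the putative convergents $P_{m}/Q_{m}$ obey the three-term recurrence and that $Q_{m}B-P_{m}=O(x^{2m})$, using the algebraic relation $xc(x)^{2}=c(x)-1$.

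Once the Jacobi fraction is established, the Hankel transform is $h_{n}=\prod_{k=1}^{n}\beta_{k}^{\,n+1-k}$, using $b_{0}=\mathbf{C}(g)(0)=1$. I would substitute the closed form for $\beta_{k}$, introduce an auxiliary product $\pi_{k}$ so that $\prod_{k\le n}\beta_{k}^{\,n+1-k}$ telescopes, and read off that $h_{n}$ is a $\mathbb{Z}(a,b)$-linear combination of (generically) four geometric sequences with ratios the reciprocals of the roots of the claimed denominator $1-2(1+b)x+(a^{2}+4b-2a^{2}b+b^{2}+a^{2}b^{2})x^{2}-2b^{2}(1+b)x^{3}+b^{4}x^{4}$. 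To finish I would either sum the four geometric series directly, or check that $\sum_{n}h_{n}x^{n}$ and the claimed rational function satisfy the same fourth-order linear recurrence and agree on $h_{0},h_{1},h_{2},h_{3}$ --- the latter computed from $b_{0},\dots,b_{6}$ via $b_{n}=\sum_{k}\binom{2n}{n-k}a_{k}^{*}$ with $a^{*}$ the sequence of $\frac{1-bx^{2}}{1+ax}$; a discrepancy here would simultaneously pin down the correct formula.

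The crux is the first step: guessing, and then rigorously proving, the continued fraction of $B(x)$. The obstacle is precisely the two-scale structure --- $a$ perturbs only finitely many of the $\alpha_{n},\beta_{n}$ while $b$ drives a geometric tail --- so the coefficients are not eventually periodic and the usual ``recognize a periodic continued fraction'' shortcut is unavailable; one genuinely has to carry $\theta=b^{m}$ through the fixed-point computation. I would also expect to handle degenerate specializations (values of $a,b$ making an intermediate Hankel determinant vanish, so that no Stieltjes fraction exists and one must work with Jacobi fractions throughout), and to run a small-case sanity check at, say, $(a,b)=(0,0)$ and $(a,b)=(1,0)$ to catch any coefficient slips in the stated formula.
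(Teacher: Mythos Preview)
The paper does not prove this statement: it is explicitly labelled a \emph{Conjecture}, and the surrounding text offers only worked examples (the cases $(a,b)=(2,0)$, $(1,-1)$, $(1,-2)$) as evidence. There is therefore no ``paper's proof'' for your attempt to be compared against.

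As to your proposal itself, what you have written is a strategy outline rather than a proof. The plan---obtain a Jacobi continued fraction for $\mathbf{C}(g)$, extract the $\beta_n$, and then use $h_n=\prod_{k=1}^n\beta_k^{\,n+1-k}$---is the standard route, and your substitution $u=xc(x)$ is a sensible first reduction. But the two load-bearing steps are left entirely unexecuted. First, you never actually compute (or even state) the $\alpha_n$ and $\beta_n$; you only assert that they ``should be rational expressions in $a$, $b$ and $b^n$''. That is itself a conjecture, and it is exactly the hard part: you yourself flag it as ``the crux'' and ``the obstacle'', and your tail fixed-point scheme presupposes a guessed closed form you have not supplied. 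Second, even granting a closed form for $\beta_n$, the inference that $h_n=\prod_k\beta_k^{\,n+1-k}$ is a linear combination of four geometric sequences is not automatic---for generic $\beta_n$ this product has no rational generating function at all---so the sentence ``read off that $h_n$ is a $\mathbb{Z}(a,b)$-linear combination of \ldots'' hides a genuine computation that must be carried out, not asserted. Your proposed fallback (``check that $\sum h_nx^n$ and the claimed rational function satisfy the same fourth-order recurrence and agree on $h_0,\dots,h_3$'') is circular: agreement on four initial values plus a shared recurrence proves equality only once you have independently established that $(h_n)$ \emph{does} satisfy that recurrence, which is the very thing in question.

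In short: the paper offers no proof to benchmark against, and your document is a credible research plan for attacking the conjecture, but it is not yet a proof---the continued-fraction coefficients must actually be found and verified, and the product formula must actually be summed, before the argument closes.
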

\begin{example}  When $a=2, b=0$ we find that the transform of the sequence with generating function $1+2x$ is the sequence \seqnum{A072547} with generating function $\frac{1-x+\sqrt{1-4x}}{(2+x)\sqrt{1-4x}}$ which is also the transform of the Jacobsthal numbers \seqnum{A078008} with generating function $\frac{1-x}{1-x-2x^2}=\frac{1-x}{(1+x)(1-2x)}$ by the Catalan array $(1, xc(x))$. The integral representation of this moment sequence is given by
$$b_n=\frac{1}{\pi}\int_0^4 \frac{x^n (x-1)}{(1+2x) \sqrt{x(4-x)}}\,dx+\left(-\frac{1}{2}\right)^n.$$
Its Hankel transform is the sequence that begins
$$1, 2, 0, -8, -16, 0, 64, 128, 0, -512, -1024,\ldots$$ with generating function $\frac{1}{1-2x+4x^2}$.
\end{example}
\begin{example} When $a=1, b=-1$ we find that the transform of the sequence with generating function $\frac{1+x}{1+x^2}$ is the sequence $0^n+\sum_{k=0}^n \binom{n}{k}\binom{n}{k+1}$ with generating function $\frac{(1-4x-\sqrt{1-4x})(1-2x)}{2x(4x-1)}$. This sequence begins
 $$1, 1, 4, 15, 56, 210, 792,\ldots,$$ with moment representation
 $$b_n = \frac{1}{\pi} \int_0^4 \frac{x^n(x-2)}{2\sqrt{x(4-x)}}\,dx+0^n.$$ This sequence is a variant of \seqnum{A158500}.
 The Hankel transform of the image sequence $b_n$ has generating function $\frac{1+3x+x^2-x^3}{\left(1+x^2\right)^2}$. This sequence begins
$$1, 3, -1, -7, 1, 11, -1, -15, 1, 19, -1,\ldots.$$
The absolute value sequence (beginning with $0$)
$$0, 1, 3, 1, 7, 1, 11, 1, 15, 1, 19, 1,\ldots$$ is recorded in the OEIS as \seqnum{A266724}, where it is described as the number of OFF (white) cells in the $n$-th iteration of the ``Rule 59'' elementary cellular automaton starting with a single ON (black) cell.
\end{example}
\begin{example} When $a=1, b=-2$, we find that the sequence $a_n$ with generating function $\frac{1+x}{1+2x^2}$ begins $$1, 1, -2, -2, 4, 4, -8, -8, 16, 16, -32,\ldots.$$ This is $(-1)^{\binom{n+1}{2}}2^{\lfloor \frac{n}{2} \rfloor}$. The reciprocal sequence $a_n^*$ begins 
$$1, -1, 3, -3, 3, -3, 3, -3, 3, -3, 3,\ldots.$$ The $\mathbf{C}$ transform $b_n$, which begins
$$1, 1, 5, 20, 77, 294, 1122, 4290, 16445,\ldots$$ has generating function $\frac{\sqrt{1-4x}(5x-2)+12x^2-11x+2}{2x(4x-1)}$. It has the moment representation
$$b_n=\frac{1}{\pi} \int_0^4 \frac{x^n(2x-5)}{2 \sqrt{x(4-x)}}\,dx+\frac{3}{2}0^n.$$
The Hankel transform of the image sequence $b_n$ has generating function $$H(x)=\frac{1+6x-16x^2}{\left(1+x+4 x^2\right)^2}.$$ We remark that the $\mathbf{C}$ transform of the reciprocal sequence $1, -1, 3, -3, 3, -3, 3, -3,\ldots$ has a Hankel transform with generating function given by 
$$H(x)=\frac{1}{1+x+4x^2}.$$ 

The sequence $b_n$ is a Narayana transform of the sequence
$$1,1, 4, 7, 10, 13, 16,19,\ldots$$ whose generating function is $\frac{1-x+3 x^2}{(1-x)^2}$.
Thus we have
$$b_n=\sum_{k=0}^n \frac{1}{n-k+1}\binom{n-1}{n-k}\binom{n}{k}(3k+3\cdot 0^k-2)=\frac{3n-1}{n+1}\binom{2n-1}{n-1}+0^n.$$
This is essentially \seqnum{A129869}, which gives the number of positive clusters of type $D_n$ \cite{assoc}. We note that the Hankel transform of $b_{n+1}$ has generating function $\frac{1-4x}{1+x+4x^2}$.
\end{example}

\section{The family $\frac{1+ax}{1-x^2}$}
We recall that the INVERT transform of a sequence with generating function $f(x)$ is the sequence with generating function $\frac{x}{1-xf(x)}$. More generally, we shall say that the INVERT$(\alpha)$ transform of a sequence with generating function $f(x)$ is the sequence with generating $\frac{f(x)}{1+\alpha xf(x)}$. Thus the usual INVERT transform is the INVERT$(-1)$ transform.

We have seen that the $\mathbf{C}$ transform of the sequence $1,1,1,\ldots$ with generating function $\frac{1}{1-x}$ is the Catalan numbers $C_n$ with generating function $c(x)$. Note that $\frac{1}{1-x}=\frac{1+x}{1-x^2}$. We then have the following result.
\begin{proposition} The $\mathbf{C}$ transform of the sequence with generating function $\frac{1+ax}{1-x^2}$ is the INVERT$(a-1)$ transform of the Catalan numbers.
\end{proposition}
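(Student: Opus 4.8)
The plan is to apply the closed form $\mathbf{C}(g(x)) = \frac{1}{\sqrt{1-4x}\, g(xc(x)^2)}$ from the Proposition directly to $g(x) = \frac{1+ax}{1-x^2}$, and then to massage the result into the shape $\frac{c(x)}{1+(a-1)xc(x)}$, which is by definition the generating function of the INVERT$(a-1)$ transform of the Catalan numbers.

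First I would record the two standard identities for $c(x)$ that do all the work. From $c(x) = 1 + xc(x)^2$ we get $xc(x)^2 = c(x) - 1$; from $c(x) = \frac{1}{1-xc(x)}$ we get $xc(x) = \frac{c(x)-1}{c(x)}$ and hence $\sqrt{1-4x} = 1 - 2xc(x) = \frac{2-c(x)}{c(x)}$, where the relevant branch of $\sqrt{1-4x}$ is the one with constant term $1$. Writing $c = c(x)$ for brevity, substituting $y = xc^2 = c-1$ into $g$ gives
$$g(xc(x)^2) = \frac{1+a(c-1)}{1-(c-1)^2} = \frac{1-a+ac}{c(2-c)},$$
since $1-(c-1)^2 = 2c - c^2 = c(2-c)$. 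All of these substitutions are legitimate manipulations of formal power series because $xc(x)^2$ has zero constant term.

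Next I would substitute into the Proposition's formula and simplify:
$$\mathbf{C}(g(x)) = \frac{1}{\dfrac{2-c}{c}\cdot\dfrac{1-a+ac}{c(2-c)}} = \frac{c^2}{1-a+ac}.$$
Finally, since $1+(a-1)xc = 1 + (a-1)\frac{c-1}{c} = \frac{1-a+ac}{c}$, this equals $\frac{c}{1+(a-1)xc(x)}$, which is exactly $\frac{f(x)}{1+\alpha x f(x)}$ with $f(x) = c(x)$ and $\alpha = a-1$. By the definition of the INVERT$(\alpha)$ transform recalled above, this is the INVERT$(a-1)$ transform of the Catalan numbers, as claimed.

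I do not expect a genuine obstacle here: the only thing requiring care is the consistent choice of branch in $\sqrt{1-4x} = 1 - 2xc(x)$ and checking that the compositions are valid formal power series (they are). As an alternative, one could obtain the same conclusion with essentially no computation by specializing the already-derived formula $\mathbf{C}\!\left(\frac{1+ax}{1-bx^2}\right) = \frac{c(x)\left(1-2xc(x)+(1-b)x^2c(x)^2\right)}{(1-2xc(x))(1+(a-1)xc(x))}$ at $b=1$, where the numerator factor $1-2xc(x)$ cancels against the same factor in the denominator, leaving $\frac{c(x)}{1+(a-1)xc(x)}$.
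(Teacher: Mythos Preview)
Your argument is correct. The identities $xc(x)^2=c(x)-1$, $xc(x)=\frac{c(x)-1}{c(x)}$ and $\sqrt{1-4x}=1-2xc(x)$ are all standard consequences of the defining equation $c(x)=1+xc(x)^2$, and your algebra reducing $\mathbf{C}(g)$ to $\frac{c(x)}{1+(a-1)xc(x)}$ is clean and accurate.

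The paper proceeds differently: instead of invoking the closed form $\mathbf{C}(g)=\frac{1}{\sqrt{1-4x}\,g(xc(x)^2)}$, it uses the factorisation
\[
\mathbf{C}(g)=(1,xc(x))\cdot\left(\frac{1}{1-x},\frac{x}{1-x}\right)\cdot\left(\frac{1}{1-x},x\right)\cdot\frac{1}{g(x)}
\]
and applies the three Riordan arrays one at a time to $\frac{1-x^2}{1+ax}$, obtaining successively $\frac{1+x}{1+ax}$, then $\frac{1}{(1-x)(1+(a-1)x)}$, and finally $\frac{1}{(1-xc(x))(1+(a-1)xc(x))}=\frac{c(x)}{1+(a-1)xc(x)}$. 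That route keeps the intermediate expressions rational until the very last substitution and makes the interpretation ``Catalan transform of the binomial transform of the partial sums of the reciprocal'' visible at each stage. Your route is shorter and purely algebraic, trading that structural transparency for a direct computation; your suggested alternative of specialising the $\frac{1+ax}{1-bx^2}$ formula at $b=1$ is shorter still and entirely legitimate given the order of presentation in the paper.
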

\begin{proof}
\begin{align*}
\mathbf{C}\left(\frac{1+ax}{1-x^2}\right)&=(1, xc(x))\cdot \left(\frac{1}{1-x}, \frac{x}{1-x}\right)\cdot \left(\frac{1}{1-x},x\right)\cdot \frac{1-x^2}{1+ax}\\
&=(1, xc(x))\cdot \left(\frac{1}{1-x}, \frac{x}{1-x}\right)\cdot \frac{1+x}{1+ax}\\
&=(1, xc(x))\cdot \frac{1}{(1-x)(1+(a-1)x)}\\
&=\frac{1}{(1-xc(x))(1+(a-1)xc(x))}\\
&=\frac{c(x)}{1+(a-1)xc(x)}\end{align*}
\end{proof}
As an example, the sequence \seqnum{A000034} with generating function $\frac{1+2x}{1-x^2}$ which begins
$$1, 2, 1, 2, 1, 2, 1, 2, 1, \ldots$$ is mapped to the Fine numbers \seqnum{A000957}, with generating function
$$F(x)=\frac{c(x)}{1+xc(x)}.$$
We note that the sequence \seqnum{A010872}$(n+1)$ with generating function $\frac{1+2x}{1-x^3}$ is mapped to the sequence \seqnum{A118973}, which has generating function $(1-x)c(x)\cdot F(x)$.

\section{The family $\frac{1+ax}{1-x^3}$}
We now turn our attention to sequences with generating functions of the form $\frac{1+ax}{1-x^3}$.
\begin{example} We start by looking at the transform of $\frac{1+x}{1-x^3}$ which expands to give
$$1, 1, 0, 1, 1, 0, 1, 1, 0, 1, 1,\ldots.$$
The transform will then have generating function
$$\frac{(1-x)(1-2x-\sqrt{1-4x})}{2x^2}=1+xc(x)^3.$$ This expands to give
$$1, 1, 3, 9, 28, 90, 297, 1001, 3432, 11934,$$ which has general term
 $$b_n=0^n+\frac{3nC_n}{n+2}.$$ This image sequence is essentially \seqnum{A000245}. The Hankel transform of the image has generating function
$$\frac{1+x^2-x^3}{\left(1-x+x^2\right)^2},$$ which expands to give
$$1, 2, 2, -1, -5, -5, 1, 8, 8, -1, -11, \ldots.$$ This is \seqnum{A187307}.
\end{example}
\begin{example} We now look at $\frac{1+2x}{1-x^3}$, which expands to give
$$1, 2, 0, 1, 2, 0, 1, 2, 0, 1, 2,\ldots.$$
We have $$\mathbf{C}\left(\frac{1+2x}{1-x^3}\right)=\frac{(1-x)(1-x-(1+x)\sqrt{1-4x})}{2x^2(x+2)}=\frac{1+xc(x)^3}{1+xc(x)}.$$
This expands to give the sequence \seqnum{A118973} which begins
$$1, 0, 2, 5, 16, 51, 168, 565, 1934, 6716,\ldots.$$
We note that this is the Catalan transform of the sequence \seqnum{A028242} (follow $n+1$ by $n$) which begins
$$1, 0, 2, 1, 3, 2, 4, 3, 5, 4, 6, 5, 7,\ldots.$$
The Hankel transform of the image sequence has generating function
$$\frac{1-x+x^2-x^3}{1-3x+8x^2-3x^3+x^4}.$$
\end{example}
In general, we have the following conjecture.
\begin{conjecture} The Hankel transform of the $\mathbf{C}$ transform of the sequence with generating function $\frac{1+a x}{1-x^3}$ has its generating function given by
$$\frac{1+(a-1)x+x^2-x^3}{1-(a+1)x+((a+1)^2-1)x^2-(a+1)x^3+x^4}.$$
\end{conjecture}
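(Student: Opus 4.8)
The plan has three stages: first a closed form for the transformed generating function, then its Jacobi continued fraction (J-fraction), and finally the Hankel transform via the product formula followed by summation of the resulting series. For the first stage I would use the factorisation $\mathbf{C}(g(x))=(c(x),xc(x)^2)\cdot\tfrac{1}{(1-x)g(x)}$ from Section~3 together with $\tfrac{1-x^3}{(1-x)(1+ax)}=\tfrac{1+x+x^2}{1+ax}$; the fundamental theorem of Riordan arrays then gives
$$\mathbf{C}\!\left(\frac{1+ax}{1-x^3}\right)=c(x)\cdot\frac{1+xc(x)^2+x^2c(x)^4}{1+axc(x)^2}.$$
Simplifying numerator and denominator with the defining relation $c(x)=1+xc(x)^2$ (equivalently $xc(x)^2=c(x)-1$) reduces this to
$$B(x):=\mathbf{C}\!\left(\frac{1+ax}{1-x^3}\right)=\frac{1+xc(x)^3}{1+(a-1)xc(x)}=\frac{(1-x)c(x)^2}{1+(a-1)xc(x)},$$
which specialises correctly to the $a=1$ and $a=2$ cases appearing in the examples. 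This stage is routine.

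The second, and crucial, stage is the J-fraction expansion of $B(x)$. Since $B$ is a rational function of $x$ and $c(x)=\tfrac{1-\sqrt{1-4x}}{2x}$, it is algebraic of degree $2$ over $\mathbb{Q}(x)$; eliminating $c(x)$ between $B\bigl(1+(a-1)xc(x)\bigr)=(1-x)c(x)^2$ and $xc(x)^2-c(x)+1=0$ produces the explicit quadratic $P(x)B^2+Q(x)B+R(x)=0$ that $B$ satisfies. Writing $B(x)=\cfrac{1}{1-\beta_0x-\cfrac{\alpha_1x^2}{1-\beta_1x-\cdots}}$, the claim is that $\beta_n$ and $\alpha_n$ are explicit rational functions of $n$ \emph{in three phases} according to $n\bmod 3$ (mirroring the $1-x^3$ in the source), with coefficients depending on $a$, and that the product of the three $\alpha$'s over each period is a ratio of linear factors in the period index and so telescopes; for $a=1$ one finds $\alpha_{3k+1}=-(3k-1)(3k+2)$, $\alpha_{3k+2}=\tfrac{1}{3k+2}$, $\alpha_{3k+3}=-\tfrac{1}{3k+2}$, with period-product $\tfrac{3k-1}{3k+2}$. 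I would establish these formulas by computing $\beta_n,\alpha_n$ for small $n$ from the power series, recognising the three-phase pattern, and then proving it rigorously by showing that the successive tails of the continued fraction satisfy the predicted first-order recursions — equivalently, by checking the guessed data against the quadratic functional equation.

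For the third stage, substituting the three-phase data into $h_n=\prod_{k=1}^{n}\alpha_k^{\,n+1-k}$ gives a doubly-indexed product which telescopes because the $n$-dependence of each $\alpha_k$ is polynomial; the result is a closed form for $h_n$ that is again three-phase, of the shape $h_{3m+i}=\rho(a)^m\,p_i(m)$ with each $p_i$ linear in $m$. Summing these arithmetico-geometric series, grouped by the residue of $n$ modulo $3$ (with effective variable $\rho(a)x^3$), yields a rational generating function; clearing the common factor, one should recover the denominator $1-(a+1)x+((a+1)^2-1)x^2-(a+1)x^3+x^4$, while the numerator is determined by computing $h_0,\dots,h_3$ directly from $b_0,\dots,b_6$ (which also checks consistency with the displayed $a=1$ and $a=2$ examples).

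The main obstacle is squarely the second stage: establishing the exact three-phase J-fraction of $B(x)$ as a function of $a$ and $n$, and proving it rather than merely reading it off numerically. This is more delicate than the familiar case of a quadratic generating function whose J-fraction is \emph{eventually constant}: here the three-periodicity of $\frac{1}{1-x^3}$ survives the $\mathbf{C}$-transform, so the J-fraction coefficients grow without bound even though their periodic products telescope — which is exactly why the Hankel generating function ends up with a degree-$4$ denominator. An alternative that might make the three-phase structure more transparent is to exploit the representation $B(x)=\bigl(c(x),xc(x)^2\bigr)\cdot\tfrac{1+x+x^2}{1+ax}$ together with the production matrix of the Catalan array $\bigl(c(x),xc(x)^2\bigr)$, namely \seqnum{A039599}, and a transfer result for the Hankel transform of a Riordan-array image, though this reroutes rather than removes the computation.
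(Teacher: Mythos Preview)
The statement you are attempting to prove is labelled a \emph{Conjecture} in the paper, and the paper gives no proof of it. The only thing the paper \emph{does} prove in this section is the closed form
\[
\mathbf{C}\!\left(\frac{1+ax}{1-x^3}\right)=\frac{1+xc(x)^3}{1+(a-1)xc(x)}=\frac{(1-x)c(x)^2}{1+(a-1)xc(x)},
\]
and your Stage~1 reproduces that argument correctly. So there is no ``paper's own proof'' of the Hankel statement to compare against; you are attempting something the authors left open.

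As a plan, Stages~2 and~3 are reasonable in outline, and your guessed $a=1$ J-fraction data are numerically correct (I checked $\alpha_1,\dots,\alpha_6$ against the Hankel values $1,2,2,-1,-5,-5,1$). But what you have written is explicitly a sketch, not a proof: the crucial step --- proving the three-phase closed form for $(\beta_n,\alpha_n)$ as functions of $n$ and $a$ --- is precisely the step you flag as ``the main obstacle'' and do not carry out. Recognising a pattern from a few terms and saying one would then ``check the guessed data against the quadratic functional equation'' is a description of a method, not an execution of it; nothing in the proposal establishes the pattern for general $n$ or general $a$.

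There is also a looseness in Stage~3. Your ansatz $h_{3m+i}=\rho(a)^m p_i(m)$ with $p_i$ linear predicts a generating function with denominator $(1-\rho(a)x^3)^2$ of degree~$6$; to land on the conjectured degree-$4$ denominator you would need a systematic cancellation by a factor of degree~$2$ in the numerator. For $a=1$ this does happen (the factor is $(1+x)^2$, since $(1-x+x^2)^2=(1+x^3)^2/(1+x)^2$), but for generic $a$ the quartic $1-(a+1)x+((a+1)^2-1)x^2-(a+1)x^3+x^4$ does not have roots that are cube roots of a common quantity, so it is not clear the three-phase ansatz is the right shape beyond $a=1$. You would need either to justify the cancellation for all $a$, or to replace the three-phase model by something compatible with the palindromic quartic denominator.

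In short: your Stage~1 is complete and matches the paper; Stages~2 and~3 remain a programme rather than a proof, and since the paper itself offers no argument for the Hankel claim, a genuine proof here would go beyond what the paper establishes.
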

In particular, when $a=0$, the Hankel transform has generating function $\frac{1+x^2}{1-x^3}$, which expands to give
$$1,0,1,1,0,1,1,0,1,1,0,\ldots.$$
In this case, the original sequence $a_n$ is
$$1, 0, 0, 1, 0, 0, 1, 0, 0, 1, 0,\ldots,$$ whose $\mathbf{C}$ transform $b_n$ is the sequence \seqnum{A026012} which begins
$$1, 2, 6, 19, 62, 207, 704, 2431, 8502, 30056, 107236,\ldots.$$ The sequence $b_n$ has generating function
$(1+xc(x)^3)c(x)$.
\begin{proposition} The $\mathbf{C}$ transform of $\frac{1+ax}{1-x^3}$ has generating function
$$\frac{1+xc(x)^3}{1+(a-1)xc(x)}.$$
\end{proposition}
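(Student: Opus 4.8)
The plan is to proceed exactly as in the proof of the analogous proposition for the family $\frac{1+ax}{1-x^2}$, using the factorization of $\mathbf{C}$ recorded earlier,
$$\mathbf{C}(g(x)) = (1, xc(x))\cdot \left(\frac{1}{1-x}, \frac{x}{1-x}\right)\cdot \left(\frac{1}{1-x}, x\right)\cdot \frac{1}{g(x)}.$$
Here $\frac{1}{g(x)} = \frac{1-x^3}{1+ax}$, and the only new feature compared with the $\frac{1+ax}{1-x^2}$ case is the cubic factor $1-x^3 = (1-x)(1+x+x^2)$ in place of $1-x^2 = (1-x)(1+x)$.

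First I would apply $\left(\frac{1}{1-x}, x\right)$, i.e.\ multiply by $\frac{1}{1-x}$, to obtain $\frac{1-x^3}{(1-x)(1+ax)} = \frac{1+x+x^2}{1+ax}$. Next I would apply the binomial transform $\left(\frac{1}{1-x}, \frac{x}{1-x}\right)$; by the fundamental theorem of Riordan arrays this replaces $x$ by $\frac{x}{1-x}$ and multiplies by $\frac{1}{1-x}$, and after simplification (using $1 + \frac{x}{1-x} + \frac{x^2}{(1-x)^2} = \frac{1-x+x^2}{(1-x)^2}$ and $1 + \frac{ax}{1-x} = \frac{1+(a-1)x}{1-x}$) one gets $\frac{1-x+x^2}{(1-x)^2(1+(a-1)x)}$. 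Finally I would apply $(1, xc(x))$, which substitutes $x \mapsto xc(x)$.

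At this last step I would invoke the standard identities for $c = c(x)$: namely $c = 1 + xc^2$, equivalently $xc^2 = c-1$, and $c = \frac{1}{1-xc}$, equivalently $1-xc = \frac{1}{c}$ and $xc = \frac{c-1}{c}$. Substituting, the denominator factor $(1-xc)^2$ becomes $\frac{1}{c^2}$ and the numerator $1 - xc + (xc)^2$ becomes $\frac{c^2 - c + 1}{c^2}$, so the $c^{-2}$ factors cancel and the transform equals $\frac{c^2 - c + 1}{1 + (a-1)xc}$. Since $xc^3 = (xc^2)c = (c-1)c = c^2 - c$, the numerator is exactly $1 + xc(x)^3$, which gives the claimed formula.

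I do not expect any real obstacle here: the argument is a direct imitation of the already-proved $\frac{1+ax}{1-x^2}$ case, with the small bookkeeping wrinkle that the numerator now contributes the palindromic quadratic $1-x+x^2$, which must be carried through the binomial transform and then collapsed via the Catalan functional equation. Alternatively, one could bypass the factorization entirely and use the closed form $\mathbf{C}(g(x)) = \frac{1}{\sqrt{1-4x}\,g(xc(x)^2)}$ proved earlier together with $\sqrt{1-4x} = \frac{2-c}{c}$; then the identity to be checked reduces to the polynomial identity $1 - (c-1)^3 = (2-c)(c^2-c+1)$, both sides being $-c^3 + 3c^2 - 3c + 2$.
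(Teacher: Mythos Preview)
Your proposal is correct and follows essentially the same route as the paper's proof: the same factorization of $\mathbf{C}$, the same intermediate expressions $\frac{1+x+x^2}{1+ax}$ and $\frac{1-x+x^2}{(1-x)^2(1+(a-1)x)}$, and the same final substitution $x\mapsto xc(x)$. The only cosmetic difference is in the last algebraic collapse: the paper rewrites $1-xc+x^2c^2$ as $\frac{1}{c}+x^2c^2$ and then simplifies $c(1+x^2c^3)$ to $1+xc^3$, whereas you go directly via $xc=\frac{c-1}{c}$ to identify the numerator with $c^2-c+1=1+xc^3$; both are equivalent uses of the Catalan functional equation.
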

\begin{proof}
\begin{align*}
\mathbf{C}\left(\frac{1+ax}{1-x^3}\right)&=(1, xc(x))\cdot \left(\frac{1}{1-x},\frac{x}{1-x}\right)\cdot \left(\frac{1}{1-x},x\right)\cdot \frac{1-x^3}{1+ax}\\
&=(1, xc(x))\cdot \left(\frac{1}{1-x},\frac{x}{1-x}\right)\cdot \frac{1-x^3}{(1-x)(1+ax)}\\
&=(1, xc(x))\cdot \left(\frac{1}{1-x},\frac{x}{1-x}\right)\cdot \frac{1+x+x^2}{1+ax}\\
&=(1, xc(x))\cdot \frac{1-x+x^2}{(1-x)^2}\cdot \frac{1}{1+(a-1)x}\\
&=\frac{1-xc(x)+x^2c(x)^2}{(1-xc(x))^2}\cdot \frac{1}{1+(a-1)xc(x)}\\
&=c(x)^2(1-xc(x)+x^2c(x)^2)\cdot \frac{1}{1+(a-1)xc(x)}\\
&=c(x)^2\left(\frac{1}{c(x)}+x^2c(x)^2\right)\cdot \frac{1}{1+(a-1)xc(x)}\\
&=c(x)(1+x^2c(x)^3)\cdot \frac{1}{1+(a-1)xc(x)}\\
&=\frac{1+xc(x)^3}{1+(a-1)xc(x)}.\end{align*}
\end{proof}
We now note that
$$1+xc(x)^3=(1-x)c(x)^2.$$
Thus the image of $\frac{1+ax}{1-x^3}$ is also given by $\frac{(1-x)c(x)^2}{1+(a-1)xc(x)}$. This give us the following result.
\begin{proposition} The generating function of the image $\frac{1+ax}{1-x^3}$ is equal to $(1-x)c(x)$ times the generating function of the image of $\frac{1+ax}{1-x^2}$.
\end{proposition}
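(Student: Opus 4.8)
The plan is to combine the two explicit generating-function formulas already established. From the proposition immediately preceding the statement we have
$$\mathbf{C}\left(\frac{1+ax}{1-x^3}\right)=\frac{1+xc(x)^3}{1+(a-1)xc(x)},$$
and from the proposition in the section on the family $\frac{1+ax}{1-x^2}$ we have
$$\mathbf{C}\left(\frac{1+ax}{1-x^2}\right)=\frac{c(x)}{1+(a-1)xc(x)}.$$
First I would invoke the former to write the image of $\frac{1+ax}{1-x^3}$ in closed form. Then I would substitute the identity $1+xc(x)^3=(1-x)c(x)^2$, recorded just above the statement, into the numerator to get
$$\mathbf{C}\left(\frac{1+ax}{1-x^3}\right)=\frac{(1-x)c(x)^2}{1+(a-1)xc(x)}=(1-x)c(x)\cdot\frac{c(x)}{1+(a-1)xc(x)}.$$
Finally I would recognise the trailing factor $\frac{c(x)}{1+(a-1)xc(x)}$ as precisely $\mathbf{C}\left(\frac{1+ax}{1-x^2}\right)$, which is exactly the claimed relation.

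The only step that needs any verification is the identity $1+xc(x)^3=(1-x)c(x)^2$, and this is a one-line consequence of the functional equation $c(x)=1+xc(x)^2$ for the Catalan generating function: multiplying that equation by $c(x)$ gives $xc(x)^3=c(x)^2-c(x)$, hence $1+xc(x)^3=c(x)^2-(c(x)-1)=c(x)^2-xc(x)^2=(1-x)c(x)^2$. Everything else is a direct substitution, so there is essentially no obstacle here; the proof is a short chain of equalities and the only care needed is to cite the correct two earlier propositions.

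As a sanity check I would note that the $a=1$ case recovers the statement that $\mathbf{C}\big(\frac{1+x}{1-x^3}\big)=(1-x)c(x)\cdot c(x)=(1-x)c(x)^2=1+xc(x)^3$, matching the formula $1+xc(x)^3$ computed in the first example of the section on $\frac{1+ax}{1-x^3}$, and likewise the $a=2$ case gives $(1-x)c(x)\cdot\frac{c(x)}{1+xc(x)}=\frac{(1-x)c(x)^2}{1+xc(x)}=\frac{1+xc(x)^3}{1+xc(x)}$, in agreement with the second example; this confirms the normalisations are consistent.
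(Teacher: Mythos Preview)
Your proof is correct and follows precisely the route the paper takes: combine the closed form $\mathbf{C}\big(\frac{1+ax}{1-x^3}\big)=\frac{1+xc(x)^3}{1+(a-1)xc(x)}$ with the identity $1+xc(x)^3=(1-x)c(x)^2$ and recognise the resulting factor $\frac{c(x)}{1+(a-1)xc(x)}$ as $\mathbf{C}\big(\frac{1+ax}{1-x^2}\big)$. Your added verification of the identity via $c(x)=1+xc(x)^2$ and the sanity checks at $a=1,2$ are nice extras that the paper omits.
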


\section{The family $\frac{1-(r-2)x+x^2}{1-sx-x^2}$}
A straightforward calculation shows that
$$\mathbf{C}\left(\frac{1-(r-2)x+x^2}{1-sx-x^2}\right)=\frac{\sqrt{1-4x}-sx}{(1-rx)\sqrt{1-4x}}.$$
However, we find it instructive to arrive at this result by using the Riordan array interpretation of the $\mathbf{C}$ transformation. The family of sequences whose generating function is given by $\frac{1-(r-2)x+x^2}{1-sx-x^2}$ can be defined by the Riordan array
$$\left(\frac{1-(r-2)x+x^2}{1-x^2}, \frac{x}{1-x^2}\right),$$ since we have
$$\left(\frac{1-(r-2)x+x^2}{1-x^2}, \frac{x}{1-x^2}\right)\cdot \frac{1}{1-sx}=\frac{1-(r-2)x+x^2}{1-sx-x^2}.$$
For the $\mathbf{C}$ transform, we are of course more interested in the reciprocal of this generating function. Again, we have a Riordan array definition, this time using the Riordan array
$$ \left(\frac{1-sx-x^2}{(1+x)^2}, \frac{x}{(1+x)^2}\right),$$ since we have
$$\left(\frac{1-sx-x^2}{(1+x)^2}, \frac{x}{(1+x)^2}\right)\cdot \frac{1}{1-rx}=\frac{1-sx-x^2}{1-(r-2)x+x^2}.$$
Thus the $\mathbf{C}$ transform of $[x^n]\frac{1-(r-2)x+x^2}{1-sx-x^2}$ will have its generating function given by
$$\left(\frac{1}{\sqrt{1-4x}}, xc(x)^2\right)\cdot \left(\frac{1-sx-x^2}{(1+x)^2}, \frac{x}{(1+x)^2}\right)\cdot \frac{1}{1-rx}.$$
Calculating the product on the left gives us
$$\left(\frac{\sqrt{1-4x}-sx}{\sqrt{1-4x}},x\right) \cdot \frac{1}{1-rx}= \frac{\sqrt{1-4x}-sx}{(1-rx)\sqrt{1-4x}},$$ as required.
We then have the following conjecture concerning the Hankel transform of the image sequences.
\begin{conjecture} The Hankel transform of the $\mathbf{C}$ transform of the sequence with generating function
$\frac{1-(r-2)x+x^2}{1-sx-x^2}$ has a Hankel transform whose generating function is given by $$\frac{1-s(r+s-2)x}{1+2s(2-r)x+4s^2x^2}.$$
\end{conjecture}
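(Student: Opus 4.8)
The plan is to reduce the statement to a Hankel‑determinant evaluation amenable to orthogonal‑polynomial methods, having first stripped the parameter $r$ out of the algebraic part of the generating function. Recall that the image sequence has generating function $B(x)=\frac{\sqrt{1-4x}-sx}{(1-rx)\sqrt{1-4x}}$. The first step is to observe that $B(x)$ is the image, under the $r$‑binomial transform $\left(\frac{1}{1-rx},\frac{x}{1-rx}\right)$, of the series
$$g_0(x)=1-\frac{sx}{\sqrt{(1+rx)(1+(r-4)x)}},$$
which one checks directly from $\frac{1}{1-rx}\,g_0\!\left(\frac{x}{1-rx}\right)=B(x)$, using $\left(1+\tfrac{rx}{1-rx}\right)\left(1+\tfrac{(r-4)x}{1-rx}\right)=\tfrac{1-4x}{(1-rx)^2}$. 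Since the $r$‑binomial transform increases every Jacobi $\alpha$‑parameter by $r$ and leaves the $\beta$‑parameters unchanged, it preserves the Hankel transform, so it suffices to compute the Hankel transform of $g_0$. Applying a further $(r-2)$‑binomial transform reduces this in turn to the symmetric series $\frac{1}{1-(r-2)x}\bigl(1-\tfrac{sx}{\sqrt{1-4x^2}}\bigr)=\frac{1}{1-(r-2)x}\bigl(1-s\sum_{n\ge 0}\binom{2n}{n}x^{2n+1}\bigr)$, with the same Hankel transform; here the only nonelementary ingredient is the central binomial sequence $\binom{2n}{n}$, whose ordinary Hankel transform is $2^n$.

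The second step is to pin down the Jacobi continued fraction of $g_0$. Its base series $\frac{1}{\sqrt{(1+rx)(1+(r-4)x)}}$ is of arcsine type, with $\alpha_n\equiv 2-r$, $\beta_1=2$ and $\beta_n\equiv 1$ for $n\ge 2$ (specialising at $r=0$ to the usual expansion of $1/\sqrt{1-4x}$). Passing from the base to $g_0=1-sx\cdot(\text{base})$ is, at the level of the moment functional, a Geronimus transformation — division of the functional by $x$ — followed by the addition of a point mass at the origin, the free parameter of the Geronimus step being fixed by the normalisation $g_0(0)=1$; both operations have classical, explicit effects on the Jacobi parameters. Carrying this out should give $\beta_1=-s(s+2-r)$ together with a sequence $\beta_2,\beta_3,\dots$ given by an explicit closed formula and tending to $1$. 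Then the Hankel transform is $h_n=\prod_{k=1}^{n}\beta_k^{\,n+1-k}$, and one checks that this satisfies $h_n+2s(2-r)h_{n-1}+4s^2h_{n-2}=0$ for $n\ge 2$ with $h_0=1$ and $h_1=-s(s+2-r)$, which is equivalent to the asserted generating function. The cases $s=0$ (giving $h_n=0^n$) and $r=s=1$ (giving $h_n=1,-2,0,8,\dots$) are convenient sanity checks.

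The step I expect to be the real obstacle is that this classical picture degenerates for some parameter values: already at $r=s=1$ one finds $h_2=0$, so not every $\beta_n$ is defined and the associated orthogonal polynomials have degenerate blocks. A complete proof must therefore either use the generalised Jacobi continued fraction, which "jumps" over the degenerate blocks, or — more robustly — proceed by a direct determinant evaluation. For the latter one writes $b_n=r^n-s\,t_{n-1}$, where $t_m=[x^m]\frac{1}{(1-rx)\sqrt{1-4x}}=\sum_{j}r^j\binom{2(m-j)}{m-j}$ and $t_{-1}:=0$; then the Hankel matrix $(b_{i+j})_{0\le i,j\le n}$ is the rank‑one perturbation $-s\,T_n+\mathbf{v}\mathbf{v}^{\top}$ of the Hankel matrix $T_n=(t_{i+j-1})_{0\le i,j\le n}$, with $\mathbf{v}=(1,r,\dots,r^n)^{\top}$, so that $h_n=(-s)^{n+1}\det T_n+(-s)^{n}\,\mathbf{v}^{\top}\!\bigl(\operatorname{adj}T_n\bigr)\mathbf{v}$. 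This splits the task into (i) the Hankel determinant of the sequence $0,t_0,t_1,\dots$ — a Geronimus‑type modification of $\binom{2n}{n}$, whose Hankel transform $2^n$ above governs it — and (ii) the bordered Hankel determinant $\mathbf{v}^{\top}(\operatorname{adj}T_n)\mathbf{v}=-\det\!\begin{pmatrix}T_n&\mathbf{v}\\\mathbf{v}^{\top}&0\end{pmatrix}$, which evaluates in terms of the monic orthogonal polynomials attached to $t_m$ at the point $1/r$. Reassembling these two pieces with their weights $(-s)^{n+1}$ and $(-s)^{n}$, and verifying that the resulting sequence obeys the second‑order recurrence read off from the denominator $1+2s(2-r)x+4s^2x^2$, is where the genuine work lies, and is presumably why the statement is given only as a conjecture.
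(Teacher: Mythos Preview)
The paper does not prove this statement: it is presented as a conjecture, with no argument offered beyond the example $r=2$, $s=-1$. So there is no ``paper's proof'' to compare against; your proposal is an attempt to go beyond what the paper does.

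Your preliminary reductions are correct and useful. The identification of $B(x)$ as the $r$-binomial transform of $g_0(x)=1-\dfrac{sx}{\sqrt{(1+rx)(1+(r-4)x)}}$ checks out, and a further $(r-2)$-binomial transform indeed carries $g_0$ to $\dfrac{1}{1-(r-2)x}\Bigl(1-\dfrac{sx}{\sqrt{1-4x^2}}\Bigr)$, so by Layman's theorem the Hankel transform is unchanged throughout. Likewise, your description of $1/\sqrt{(1+rx)(1+(r-4)x)}$ as the $(-r)$-binomial shift of $1/\sqrt{1-4x}$, with $\alpha_n\equiv 2-r$, $\beta_1=2$, $\beta_n\equiv 1$ for $n\ge 2$, is right, and the passage $F\mapsto 1-sxF$ is exactly a Geronimus modification at the origin plus a point mass, as you say.

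The gap is that none of this yet \emph{proves} the conjecture: the entire content of the statement is the claim that $h_n$ satisfies the specific second-order recurrence $h_n+2s(2-r)h_{n-1}+4s^2h_{n-2}=0$, and you have not established this. You compute $\beta_1=-s(s+2-r)$ correctly (it matches $h_1$), but you only assert that $\beta_2,\beta_3,\ldots$ are ``given by an explicit closed formula'' without producing it, and you only assert that the product $\prod\beta_k^{\,n+1-k}$ satisfies the recurrence without checking it. The Geronimus formulas for the new $\beta_n$'s involve ratios of values of the old orthogonal polynomials at the shift point, and turning those into a clean second-order recurrence for $h_n$ is the actual work; the degeneracies you flag (e.g.\ $h_2=0$ at $r=s=1$) show that the $\beta_n$'s are not individually well-defined for all parameters, so the argument cannot be completed at the level of Jacobi parameters alone. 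Your alternative rank-one-perturbation scheme $b_n=r^n-s\,t_{n-1}$ is a sensible way around this, but there too you stop before evaluating either $\det T_n$ or the bordered determinant $\mathbf v^\top(\operatorname{adj}T_n)\mathbf v$, and before showing that their combination satisfies the recurrence. As you yourself conclude, what you have is a credible \emph{plan}, not a proof; the conjecture remains open after your proposal just as it does in the paper.
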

\begin{example} When $r=2, s=-1$, we obtain the sequence with generating function $\frac{1+x^2}{1+x-x^2}$ which begins
$$1, -1, 3, -4, 7, -11, 18, -29, 47, -76, 123,\ldots.$$ This is a signed variant of the Lucas numbers \seqnum{A000032}. The $\mathbf{C}$ transform $b_n$ has generating function $\frac{\sqrt{1-4x}+x}{(1-2x)\sqrt{1-4x}}$, and begins
$$1, 3, 8, 22, 64, 198, 648, 2220, 7872, 28614,\ldots.$$
The Hankel transform of the sequence $b_n$ is given by
$$1, -1, -4, 4, 16, -16, -64, 64, 256, -256, -1024,\ldots,$$ with generating function
$$\frac{1-x}{1+4x^2}.$$
\end{example}
We can express the $\mathbf{C}$ transform of this section in terms of Riordan arrays, as we have
$$\mathbf{C}\left(\frac{1-(r-2)x+x^2}{1-sx-x^2}\right)=\frac{\sqrt{1-4x}-sx}{(1-rx)\sqrt{1-4x}}=\left(1-\frac{s x}{\sqrt{1-4x}}, x\right)\cdot \frac{1}{1-rx}.$$

\section{The family $\frac{1+x^r}{1-x^r}$}
The family of sequences starting with $[x^n]\frac{1+x}{1-x}$ (that is, $r=1$) begins
$$1,2,2,2,\ldots$$ and then they proceed, as $r \in \mathbb{N}$ is incremented, to successively aerate to give
$$1,0,2,0,2,0,\ldots,$$
$$1,0,0,2,0,0,2,0,0,\ldots,$$
$$1,0,0,0,2,0,0,0,2,0,0,\ldots,$$ and so on.
In order to calculate their $\mathbf{C}$ transforms, we apply the matrix $\left(\binom{2n}{n-k}\right)$ to their reciprocals. These reciprocals begin
$$1,-2,2,-2,\ldots$$ and then they proceed to successively aerate to give
$$1,0,-2,0,2,0,\ldots,$$
$$1,0,0,-2,0,0,2,0,0,\ldots,$$
$$1,0,0,0,-2,0,0,0,2,0,0,\ldots,$$ and so on. We give below a table of these transforms, using their generating functions.

\begin{center}
\begin{tabular}{|c|c|}
\hline $g(x)$ & $\mathbf{C}(g(x))$ \\
\hline $\frac{1+x}{1-x}$ & $1$  \\
\hline $\frac{1+x^2}{1-x^2}$ & $\frac{1}{1-2x}$   \\
\hline $\frac{1+x^3}{1-x^3}$ & $\frac{1}{1-3x}$   \\
\hline $\frac{1+x^4}{1-x^4}$ & $\frac{1-2x}{1-4x+2x^2}$  \\
\hline $\frac{1+x^5}{1-x^5}$ & $\frac{1-3x+x^2}{1-5x+5x^2}$ \\
\hline $\frac{1+x^6}{1-x^6}$ & $\frac{1-4x+3x^2}{1-6x+9x^2-2x^2}$ \\ \hline
\end{tabular}
\end{center}
The explanation of the pattern emerging comes from the fact that Riordan arrays of the form
$$\left(\frac{1+\alpha x + \beta x^2}{1+a x + b x^2},\frac{x}{1+a x + b x^2}\right)$$ are the coefficient arrays of families of (constant coefficient) orthogonal polynomials \cite{classical}. Thus we define four families of orthogonal polynomials, $P_n(x), Q_n(x), R_n(x)$ and $S_n(x)$ with coefficient arrays given by, respectively,
$$\left(\frac{1}{1+x}, \frac{x}{(1+x)^2}\right), \left(\frac{1}{(1+x)^2}, \frac{x}{(1+x)^2}\right),$$
$$\left(\frac{1-x}{(1+x)^2}, \frac{x}{(1+x)^2}\right)\quad\text{and}\quad\left(\frac{1-x}{1+x}, \frac{x}{(1+x)^2}\right).$$
Then, if $n$ is odd, the transform of $\frac{1+x^n}{1-x^n}$ is given by the quotient
$$\frac{P_n\left(\frac{1}{x}\right)}{R_n\left(\frac{1}{x}\right)},$$ and if $n$ is even, the transform is given by $$\frac{\frac{1}{x}Q_n\left(\frac{1}{x}\right)}{S_n\left(\frac{1}{x}\right)}.$$

If we evaluate the list of quotients above for $x=2$, we get the fractions
$$1, - \frac{1}{3}, \frac{1}{5}, -3, -\frac{1}{11},\frac{5}{9}, -\frac{7}{13}, \frac{3}{31}, \frac{17}{5}, - \frac{11}{57}, \frac{32}{67},\ldots.$$
The sequence of denominators is then
$$1, 3, 5, 1, 11, 9, 13, 31, 5, 57, 67,\ldots,$$
or \seqnum{A077021}. The numerators
$$1, -1, 1, -3, -1, 5, -7, 3, 17, -11, 23,$$
are an alternatively signed version of \seqnum{A107920} (the Lucas and Lehmer numbers with parameters $\left(1\pm \sqrt{-7}\right)/2$).
In fact, we have that the pairs
$$ (1, 1), (1, 3), (1, 5), (3, 1), (1, 11), (5, 9), (7, 13), (3, 31),\ldots$$ are the solutions of the diophantine equation
$$2^n=7x^2+y^2.$$
The signed sequence of denominators
$$1, -3, -5, 1, 11, 9, -13, -31, -5, 57, 67, -47, -181,\ldots$$ (essentially \seqnum{A002249}) is
$$2\cdot 2^{\frac{n+1}{2}}\cos((n+1)\arctan(\sqrt{7}))=[x^n]\frac{1-4x}{1-x+2x^2}.$$
The sequence
$$ 1, 1, -1, -3, -1, 5, 7, -3, -17, -11,\ldots$$ has general term
$$ \frac{2}{\sqrt{7}}2^{\frac{n+1}{2}} \sin((n+1) \arctan(\sqrt{7}))=[x^n]\frac{1}{1-x+2x^2}.$$
We are not aware of the generating function of the sequence
$$1, 3, 5, 1, 11, 9, 13, 31, 5, 57, 67,\ldots.$$

In like fashion, the ratios for $x=4$ are associated to the diophantine equation
$$4^{n+1}=15x^2+y^2.$$ The ratios are
$$1, - \frac{1}{7},\frac{3}{11} , - \frac{7}{17},\frac{5}{61} , -\frac{33}{7} , -\frac{13}{251},\frac{119}{223} , - \frac{171}{781}, \frac{305}{1673}, -\frac{989}{1451},\ldots.$$
The sequence of denominators $1, 7, 11, 17, 61, 7, 251, 223, 781, 1673, 1451,\ldots$ is the absolute value of the sequence
$$1, -7, -11, 17, 61, -7, -251, -223, 781, 1673, -1451,\ldots$$  with generating function $\frac{1-8 x}{ 1-x+4x^2}$,  with general term $2^{n+2}\cos((n+1)\arctan(\sqrt{15}))$. This is essentially \seqnum{A272931}.
The numerator sequence
$$1, -1, 3, -7, 5, -33, -13, 119, -171, 305, -989,\ldots,$$ is an alternatively signed version
of \seqnum{A106853}, which begins
$$1, 1, -3, -7, 5, 33, 13, -119, -171, 305, 989,\ldots$$ and which has generating function
$$\frac{1}{1-x+4x^2}.$$
In general, the ratios in $x$ are given by
$$\mathbf{C}\left(\frac{1+x^n}{1-x^n}\right)=\frac{[y^n] \frac{1}{1-y+xy^2}}{[y^n]\frac{1-2xy}{1-y+xy^2}}=\frac{\sum_{j=0}^n \binom{n-j}{j}(-x)^j}{\sum_{j=0}^n \binom{n-j}{j}(-x)^j-2x \sum_{j=0}^{n-1}\binom{n-j-1}{j}(-x)^j}.$$
In the limit, as $n \to \infty$, the expansions of $\frac{1+x^n}{1-x^n}$ converge to $0^n$ (due to the ``infinite'' aeration), and thus the limit generating function is $\frac{1}{\sqrt{1-4x}}$. For instance, the transform of $\frac{1+x^{10}}{1-x^{10}}$ is given by $\frac{1-9x+28x^2-35x^3+15x^4-x^5}{1-11x+44x-77x+55x-11x^5}$. The expansion of this generating function
begins
$$1, 2, 6, 20, 70, 252, 924, 3432, 12870, 48620, 184756,\ldots$$ which agrees with $\binom{2n}{n}$ to the number of terms shown (that is, the first $11$ terms). Thus we obtain a sequence of rational approximations to $\frac{1}{\sqrt{1-4x}}$, that is, we have
$$\lim_{n \to \infty} \mathbf{C}\left(\frac{1+x^n}{1-x^n}\right)=\mathbf{C}(1)=\frac{1}{\sqrt{1-4x}}.$$

\section{Pre-images of Narayana polynomials}
We let $$N_{n,k}=\frac{1}{k+1} \binom{n}{k}\binom{n+1}{k}$$ be the (symmetric) Narayana triangle \seqnum{A001263}. We consider the $\mathbf{C}$ pre-images of the sequences $\sum_{k=0}^n N_{n,k}r^k$,  for $r \in \mathbb{N}$. For $r=0,1,2$ we get the sequence of all $1$'s sequence, the sequence $C_{n+1}$ of once shifted Catalan numbers, and $s_{n+1}$, the once-shifted sequence of small Schroeder numbers, respectively. We have
$$[x^n] \frac{1+x+x^2}{1-x^2} \longrightarrow 1,1,1,1,\ldots$$  and
$$[x^n] \frac{1}{1-x^2} \longrightarrow 1,2,5,14,\ldots.$$
Thus we might hope that the pre-images sought would be rational in nature. However, this is not the case.
\begin{proposition} The Narayana polynomials $\sum_{k=0}^n N_{n,k}r^k$ whose generating functions are $\frac{1-(r+1)x-\sqrt{1-2(r+1)x+(r-1)^2x^2}}{2rx^2}$ have a pre-image whose generating function is given by
$$\frac{1-(r-1)x+x^2+\sqrt{1-2(r-1)x+(r^2-6r+3)x^2-2(r-1)x^3+x^4}}{2(1-x^2)}.$$
\end{proposition}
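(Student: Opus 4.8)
The plan is to invert the transformation directly, using the Corollary which states that if $h(x)=\mathbf{C}(g(x))$ then $g(x)=1\big/\!\left(\tfrac{1-x}{1+x}\,h\!\left(\tfrac{x}{(1+x)^2}\right)\right)$. Writing $N_r(x)=\frac{1-(r+1)x-\sqrt{1-2(r+1)x+(r-1)^2x^2}}{2rx^2}$ for the generating function of the Narayana polynomials $\sum_k N_{n,k}r^k$ given in the statement, the pre-image we seek is exactly $g(x)=1\big/\!\left(\tfrac{1-x}{1+x}N_r\!\left(\tfrac{x}{(1+x)^2}\right)\right)$, so the whole proof reduces to simplifying this expression.

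First I would carry out the substitution $x\mapsto x/(1+x)^2$ inside $N_r$. The rational part is immediate: $(1+x)^2-(r+1)x=1-(r-1)x+x^2$. The only delicate ingredient is the radicand. Clearing $(1+x)^4$ turns $1-2(r+1)\tfrac{x}{(1+x)^2}+(r-1)^2\tfrac{x^2}{(1+x)^4}$ into $\tfrac{1}{(1+x)^4}\big((1+x)^4-2(r+1)x(1+x)^2+(r-1)^2x^2\big)$. Viewing the bracketed quartic as a quadratic in $(1+x)^2$, its discriminant is $16rx^2$, so it factors as $\big((1+x)^2-(\sqrt r+1)^2x\big)\big((1+x)^2-(\sqrt r-1)^2x\big)=\big(1-(r-1)x+x^2\big)^2-4rx^2$; expanding the last form gives precisely $Q_r(x):=1-2(r-1)x+(r^2-6r+3)x^2-2(r-1)x^3+x^4$, the polynomial appearing under the square root in the statement.

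Collecting these pieces, a short computation yields $\tfrac{1-x}{1+x}N_r\!\left(\tfrac{x}{(1+x)^2}\right)=\frac{(1-x^2)\left(1-(r-1)x+x^2-\sqrt{Q_r(x)}\right)}{2rx^2}$. Taking the reciprocal and rationalizing by $1-(r-1)x+x^2+\sqrt{Q_r(x)}$, the identity $\left(1-(r-1)x+x^2\right)^2-Q_r(x)=4rx^2$ (which is essentially the factorization above read backwards) collapses the denominator to $4rx^2(1-x^2)$, and one is left with $g(x)=\frac{1-(r-1)x+x^2+\sqrt{Q_r(x)}}{2(1-x^2)}$, as claimed. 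The $+$ sign before the radical is forced by the requirement that $g$ be a power series with $g(0)=1$, equivalently that $1-(r-1)x+x^2-\sqrt{Q_r(x)}$ vanish to order $2$ at $x=0$, which holds because $\sqrt{Q_r(x)}=1-(r-1)x+O(x^2)$.

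Every step is routine algebra; the one place requiring genuine care is the factorization of the quartic radicand under the fractional-linear substitution, and this is exactly what makes the complicated-looking $Q_r$ emerge cleanly -- and what makes transparent why the pre-image is genuinely algebraic of degree $2$ for $r\ge 1$ rather than rational, in contrast to what one might naively have expected. I would finish with the sanity check $r=0$: then $Q_0(x)=(1+x+x^2)^2$, so the formula specializes to $g(x)=\frac{1+x+x^2}{1-x^2}$, recovering the pre-image of the all-ones sequence recorded earlier (and consistent with $\lim_{r\to0}N_r(x)=\frac{1}{1-x}$).
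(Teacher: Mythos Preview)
Your proof is correct. The paper states this proposition without proof, so there is no argument to compare against; your write-up supplies exactly the computation the paper suppresses, via the natural route of applying the inversion Corollary $g(x)=\big(\tfrac{1-x}{1+x}\,h(\tfrac{x}{(1+x)^2})\big)^{-1}$ with $h=N_r$. The crux---recognizing that the quartic radicand after the substitution equals $(1-(r-1)x+x^2)^2-4rx^2$---is handled cleanly (and the slick way to see it, which you use, is simply $(1+x)^2-(r+1)x=1-(r-1)x+x^2$ together with $(r+1)^2-4r=(r-1)^2$), and this identity is precisely what makes the rationalization collapse. The $r=0$ sanity check matching the paper's earlier observation that $\frac{1+x+x^2}{1-x^2}\mapsto 1,1,1,\ldots$ is a good confirmation.
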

For the unshifted sequences $C_n$, $s_n$ and so on, the relevant generating function is
$$1+\frac{1-(r+1)x-\sqrt{1-2(r+1)x+(r-1)^2x^2}}{2rx}.$$
The pre-image of this generating function for the $\mathbf{C}$ transform is then given by
$$\frac{2rx(1+x)}{(1-x)(1+(r+1)x+x^2-\sqrt{1-2(r-1)x+(r^2-6r+3)x^2-2(r-1)x^3+x^4})}.$$
For instance, the pre-image of the little Schroeder numbers begins
$$1, 1, 0, -1, -4, -11, -30, -83, -236, -689, -2056,\ldots,$$ with generating function
$$\frac{4x(1+x)}{(1-x)(1+3x+x^2-\sqrt{1-2x-5x^2-2x^3+x^4})}.$$

\section{Sequences from mutation effects in trees}
Enumeration in trees is a rich source of integer sequences. One particular aspect is the study of mutations in ordered trees \cite{mutate}. We here examine the pre-images of some sequences arising in this context. We use the abbreviations $B=\frac{1}{\sqrt{1-4x}}$ and $C=c(x)=\frac{1-\sqrt{1-4x}}{2x}$. By the reciprocal of a sequence, we mean the sequence whose generating function is the reciprocal of the generating function of the original sequence.

\begin{center}
\begin{tabular}{|c|c||c|c|c|}
\hline Number & Comment & Pre-image & Number & Comment\\
\hline A000346 & $2\cdot 4^n-\binom{2n+1}{n+1}$ &  $\frac{1-x}{(1+x)^2}$ & A157142 & Signed odd numbers\\
\hline A001700 & $\binom{2n+1}{n+1}$ &   $\frac{1}{1+x}$ & A033999 & $(-1)^n$ \\
\hline A002057 & $[x^n]c(x)^4$ &  $\frac{1}{(1-x)(1+x)^3}$ & $(-1)^n$ A002620 & Signed quarter squares \\
\hline A007852 & $T_0=\frac{1-\sqrt{5-4C}}{2-C}$ & $\frac{1+xc(-x)}{1+x}$ & A099324 & $1, 0, -1, -3, -8, -22,\ldots$ \\
\hline A007856 & $\frac{B}{C}T_0$ & $1-xc(x)=\frac{1}{c(x)}$ & A115140 & Reciprocal of Catalan \\
\hline A097070 & $\frac{B-1}{2}+xB^3$ & $\frac{1-x-x^2+x^3}{1-x+2x^2}$ & & Reciprocal of A004442 \\
\hline A097613 & $\frac{3n+1}{2n+2}\binom{2n}{2}+\frac{0^n}{2}$ & $\frac{1+x}{1+x+x^2}$ & A057078 & $\overline{1,0,-1}$ \\
\hline A114121 & $\frac{\sqrt{1-4x}+1-2x}{2(1-4x)}$ & $1-x^2$ & & $1,0,-1,0,0,0,\ldots$\\
\hline A243585 & $\sum_{k=0}^n \binom{2n}{n-k}\binom{2k}{k}$ & $\sqrt{1-4x}$ & A002420 & Reciprocal of $\binom{2n}{n}$ \\
\hline A257589 & $(2n+1)^2 C_n$ & $\frac{(1-x)^2}{(1+x)(1+4x-x^2)}$ & & $\frac{(-1)^n}{2}\left(F_{3n+4}+F_{3n+1}-2\right)$ \\
\hline
\end{tabular}
\end{center}
The generating function of \seqnum{A000346} is $\frac{1}{2x}\left(\frac{1}{1-4x}-\frac{1}{\sqrt{1-4x}}\right)$ \cite{boundary}. Its Hankel transform is $(-1)^n (2n+1)$, the signed odd numbers. This also coincides with its pre-image under the $\mathbf{C}$ transform.
We have $[x^n]c(x)^4=\frac{4}{n+4}\binom{2n+3}{n}$. Its Hankel transform is given by $(-1)^n \lfloor \frac{n+3}{2} \rfloor$ with generating function $\frac{1-2x-x^3}{\left(1+x^2\right)^2}$.

The sequence \seqnum{A099324} with generating function $\frac{1+\sqrt{1+4x}}{2(1+x)}=\frac{1+xc(-x)}{1+x}$ begins
$$1, 0, -1, -3, -8, -22, -64, -196, -625, -2055, -6917,\ldots.$$ These are the alternating sums of the Catalan variant sequence that begins
$$1, 1, -1, 2, -5, 14, -42, 132, -429, 1430, -4862,\ldots.$$
The Hankel transform of \seqnum{A099324} is the sequence $\overline{1,-1,0}$, with generating function $\frac{1}{1+x+x^2}$.

The Hankel transform of \seqnum{A115140} with generating function $1-xc(x)$ is $(-1)^n (n+1)$ with generating function $\frac{1}{(1+x)^2}$.

The sequence \seqnum{A097070} gives the number of compositions of $n$ into $n$ parts, allowing zeros. It is given by $\frac{1}{2}\left(0^n+(n+1)\binom{2n}{n}\right)$. Its Hankel transform begins
$$1, 5, -14, -26, 43, 63, -88, -116, 149, 185, -226,\ldots,$$ with generating function
$$H(x)=\frac{1+5x-11x^2-11x^3+4x^4}{(1+x^2)^3}.$$  The sequence \seqnum{A097070} is the binomial transform of the sequence \seqnum{A113682}, which has general element
$$\sum_{k=0}^n \binom{n+1}{k+1}\binom{k}{n-k}.$$ The sequence \seqnum{A004442} begins
$$1, 0, 3, 2, 5, 4, 7, 6, 9, 8, 11, 10, 13, 12, 15, 14, 17, 16,\ldots$$ which is a ``pair-reversed'' arrangement of the natural numbers: $n+(-1)^n$. Its generating function is $\frac{1-x+2x^2}{(1-x)(1-x^2)}$.

The $\mathbf{C}$ transform of the sequence $n+(-1)^n$ is of interest in itself. We find that
$$\mathbf{C}\left(\frac{1-x+2x^2}{(1-x)(1-x^2)}\right)=\frac{\sqrt{1-4x}(1-2x-(1-4x)\sqrt{1-4x})}{2x(2-11x+16x^2)},$$ which expands to give the sequence that begins
$$1, 2, 3, 0, -26, -150, -641, -2408, -8402, -27948, -90034,\ldots.$$
The generating function of this sequence can be represented by the continued fraction expression
$$\cfrac{1}{1-2x+
\cfrac{x^2}{1-4x+
\cfrac{x^2}{1+
\cfrac{x^2}{1-4x+
\cfrac{x^2}{1+\cdots}}}}},$$
where the coefficients of $x^2$ are all $1$ and the coefficients of $x$ follow the pattern
$$2,4,0,4,0,4,0,4,0,4,0,\ldots.$$
The Hankel transform of this image sequence is then $(-1)^{\binom{n+1}{2}}$.

An interesting point to note is that the sequence whose generating function can be expressed as
$$\cfrac{1}{1-x+
\cfrac{x^2}{1-4x+
\cfrac{x^2}{1+
\cfrac{x^2}{1-4x+
\cfrac{x^2}{1+\cdots}}}}},$$ or equivalently
$$\frac{\sqrt{1-4x}(1-2x-(1-4x)^{3/2})}{x(1-2x)(3-8x+\sqrt{1-4x})},$$ and which begins
$$1, 1, 0, -5, -24, -90, -312, -1053, -3536, -11934,\ldots$$ is \seqnum{A158499}$(n+1)$, and its $\mathbf{C}$ pre-image begins
$$1, 1, 3, 3, 5, 5, 7, 7, 9, 9, 11,\ldots,$$ with generating function
$$\frac{1+x^2}{(1-x)^2 (1+x)}.$$ The sequence \seqnum{A158499} itself begins
$$1,1, 1, 0, -5, -24, -90, -312, -1053, -3536, -11934,\ldots$$ and has $\mathbf{C}$ pre-image
$$1, 1, 2, 2, 2, 2, 2, 2, 2, 2, 2,\ldots,$$  with generating function $\frac{1+x^2}{1-x}$.
We deduce from this that \seqnum{A158499} can be expressed as
$$b_n = \sum_{k=0}^n \binom{2n}{n-k}(-1)^{\binom{k+1}{2}},$$ since the generating function
$\frac{1-x}{1+x^2}$ expands to give
$$1,-1,-1,1,1,-1,-1,1,1,-1,\ldots$$ whose general term is $(-1)^{\binom{n+1}{2}}$.

\section{Equal Hankel transforms}
We have seen in some previous examples an apparent relationship between the Hankel transform of the $\mathbf{C}$ transform and that of the $\mathbf{C}$ transform of its reciprocal. We now provide an example where the two Hankel transforms are equal. 
\begin{example} We consider the sequence $a_n$ with generating function $\left(\frac{1+x}{1-x}\right)^2$. This sequence \seqnum{A008574} begins 
$$ 1, 4, 8, 12, 16, 20, 24, 28, 32, 36, 40, 44, \ldots.$$
Its reciprocal sequence $a_n^*$ will then have generating function $\left(\frac{1-x}{1+x}\right)^2$, and it begins 
$$1, -4, 8, -12, 16, -20, 24, -28, 32, -36, 40, -44, \ldots.$$
We have $a_n^*=(-1)a_n$. 
We obtain
$$\mathbf{C}\left(\left(\frac{1+x}{1-x}\right)^2\right)=\sqrt{1-4x},$$ and 
$$\mathbf{C}\left(\left(\frac{1-x}{1+x}\right)^2\right)=\frac{1}{(1-4x)^{3/2}}.$$ 
\begin{proposition} The Hankel transforms of the $\mathbf{C}$ transforms of $a_n$ and $a_n^*$ are the same, both equal to the sequence $(2n+1)(-2)^n$ with 
$$H(x)=H^*(x)=\frac{1-2x}{(1+2x)^2}.$$
\end{proposition}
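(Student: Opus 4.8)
The plan is to reduce the proposition to a single Hankel determinant evaluation, namely that of the sequence $(2n+1)\binom{2n}{n}$, and to carry this out with a Jacobi continued fraction. First I would re-derive the two displayed $\mathbf{C}$ transforms from $\mathbf{C}(g(x))=\dfrac{1}{\sqrt{1-4x}\,g(xc(x)^2)}$. From $xc(x)^2=c(x)-1$ one gets $1+xc(x)^2=c(x)$, $1-xc(x)^2=2-c(x)$, and $\dfrac{1}{\sqrt{1-4x}}=\dfrac{c(x)}{2-c(x)}$ (equivalently $\sqrt{1-4x}=1-2xc(x)$). Hence
\[
\mathbf{C}\!\left(\left(\tfrac{1+x}{1-x}\right)^2\right)=\frac{1}{\sqrt{1-4x}}\cdot\frac{(2-c(x))^2}{c(x)^2}=\frac{2-c(x)}{c(x)}=\sqrt{1-4x},
\]
\[
\mathbf{C}\!\left(\left(\tfrac{1-x}{1+x}\right)^2\right)=\frac{1}{\sqrt{1-4x}}\cdot\frac{c(x)^2}{(2-c(x))^2}=\left(\frac{c(x)}{2-c(x)}\right)^3=(1-4x)^{-3/2}.
\]
So $H(x)$ is the generating function of the Hankel transform of $[x^n]\sqrt{1-4x}$, the sequence $1,-2C_0,-2C_1,\dots$, while $H^*(x)$ is that of $[x^n](1-4x)^{-3/2}=(2n+1)\binom{2n}{n}$.

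For the equality $H(x)=H^*(x)$ I would exhibit a common continued-fraction structure. Peeling off successive levels of the Jacobi continued fraction of $\sqrt{1-4x}$ (using $1/\sqrt{1-4x}=\sum\binom{2n}{n}x^n$) and of $(1-4x)^{-3/2}$ (using $1/(1-4x)^{-3/2}=(1-4x)\sqrt{1-4x}=(1-4x)(1-2xc(x))$), one finds in both cases that every tail lies in $\mathbb{Q}(c(x))$ and that the off-diagonal parameters agree: $\beta_1=-6$ and $\beta_n=\dfrac{(2n+1)(2n-3)}{(2n-1)^2}$ for $n\ge2$; only the diagonal parameters differ ($\alpha_0=-2$ in the first expansion, $\alpha_0=6$ in the second). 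Since the Hankel transform depends only on the $\beta_n$, via the classical formula $h_n=\prod_{k=1}^n\beta_k^{\,n+1-k}$, this already gives $H=H^*$. An alternative for the equality: $\sqrt{1-4x}=\dfrac{1}{1-4x}\,g\!\big(\tfrac{x}{1-4x}\big)$ with $g(y)=(1+4y)^{-3/2}$, so $[x^n]\sqrt{1-4x}$ is the image of $(-1)^n(2n+1)\binom{2n}{n}$ under the unitriangular Riordan array $\big(\tfrac{1}{1-4x},\tfrac{x}{1-4x}\big)=\big(\binom{n}{k}4^{n-k}\big)$; conjugating a Hankel matrix by this array reproduces, via Vandermonde's identity, the Hankel matrix of the transformed sequence, and the sign $(-1)^n$ cancels in the determinant, so the Hankel transforms coincide.

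To obtain the common value I would telescope
\[
h_n=\prod_{k=1}^n\beta_k^{\,n+1-k}=(-6)^n\prod_{k=2}^n\left(\frac{(2k+1)(2k-3)}{(2k-1)^2}\right)^{n+1-k}.
\]
Tracking the exponent of a fixed odd integer $2j+1$ in $\prod_{k=2}^n(\cdots)^{n+1-k}$ — it enters with $+(n+1-j)$ from the factor $2k+1$ at $k=j$, with $+(n-1-j)$ from $2k-3$ at $k=j+2$, and with $-2(n-j)$ from $(2k-1)^2$ at $k=j+1$, each within its valid range — every interior contribution cancels, leaving only $2n+1$ to the power $1$ and $3$ to the power $-n$. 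Thus $\prod_{k=2}^n(\cdots)^{n+1-k}=\dfrac{2n+1}{3^n}$, so $h_n=(-6)^n\cdot\dfrac{2n+1}{3^n}=(2n+1)(-2)^n$; and since $\sum_{n\ge0}(2n+1)(-2)^nx^n=\dfrac{1-2x}{(1+2x)^2}$, we get $H(x)=H^*(x)=\dfrac{1-2x}{(1+2x)^2}$.

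The step I expect to be the main obstacle is the rigorous determination of the $\beta_n$ — equivalently, producing an $LDL^T$ factorization of the Hankel matrix of $(2n+1)\binom{2n}{n}$ with diagonal ratios $d_1/d_0=-6$ and $d_n/d_{n-1}=\dfrac{(2n+1)(2n-3)}{(2n-1)^2}$. This is the one part that is not mere bookkeeping; I would handle it either by an induction on the continued-fraction tails in $\mathbb{Q}(c(x))$ or by writing out the unitriangular factor explicitly (its entries being ballot/Narayana-type numbers). Everything else — the equality $H=H^*$ and the closed form — then follows from the routine manipulations above. A self-contained alternative for the value bypasses continued fractions: on rows and columns $\ge1$ the Hankel matrix of $[x^n]\sqrt{1-4x}$ equals $-2$ times the Hankel matrix of the once-shifted Catalan numbers $C_{n+1}$, whose Hankel transform is the all-ones sequence; a Schur-complement expansion then reduces the determinant to a single scalar identity about that matrix, which delivers $(2n+1)(-2)^n$.
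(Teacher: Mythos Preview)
Your ``alternative'' route to the equality $H=H^*$ --- writing $\sqrt{1-4x}=\tfrac{1}{1-4x}\,g\!\bigl(\tfrac{x}{1-4x}\bigr)$ with $g(y)=(1+4y)^{-3/2}$ and invoking the invariance of Hankel transforms under the $4$th binomial matrix $\bigl(\tfrac{1}{1-4x},\tfrac{x}{1-4x}\bigr)$ together with sign-alternation --- is exactly the paper's argument, only run in the opposite direction (the paper applies the $4$th \emph{inverse} binomial transform to $(1-4x)^{-3/2}$ and lands on $\sqrt{1+4x}$). The paper's proof stops there: it establishes only $H=H^*$ and does not verify the closed form $(2n+1)(-2)^n$ at all.

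Your continued-fraction/Heilermann line is therefore a genuine addition. The telescoping of $\prod_{k=1}^n\beta_k^{\,n+1-k}$ to $(2n+1)(-2)^n$, given your claimed values $\beta_1=-6$ and $\beta_n=\tfrac{(2n+1)(2n-3)}{(2n-1)^2}$, is correct. The only real gap --- which you flag yourself --- is the independent determination of those $\beta_n$; at present they are asserted, and your proposed inductions on the tails or the $LDL^T$ factor remain to be written out. Your Schur-complement alternative is also viable but reduces to showing $v^T M^{-1}v=n$ for $v=(C_0,\ldots,C_{n-1})$ and $M=(C_{i+j+1})_{0\le i,j\le n-1}$, which is itself a non-trivial Catalan identity needing proof. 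In short: for the part the paper actually proves, you and the paper coincide; for the explicit value, you go further than the paper but with one honestly acknowledged step still to be filled in.
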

\begin{proof} The $4^{\text{th}}$ inverse binomial transform of the generating function $\frac{1}{(1-4x)^{3/2}}$ is equal to $\sqrt{1+4x}$. This expands to an alternating sign version of the expansion of $\sqrt{1-4x}$. The Hankel transform is unchanged by these two transformations. Thus the Hankel transforms are equal.
\end{proof}
\end{example}
It is interesting to examine the Hankel transforms of the $\mathbf{C}$ transforms of the doubled sequences of $a_n$ and $a_n^*$. Thus we look at the generating functions
$$\left(\frac{1+x^2}{1-x^2}\right)^2\quad \text{and}\quad \left(\frac{1-x^2}{1+x^2}\right)^2$$ of the sequences that we shall denote by $A_n$ and $A_n^*$ respectively. 

The $\mathbf{C}$ transform of $A_n$ has transform 
$$\mathbf{C}\left(\left(\frac{1+x^2}{1-x^2}\right)^2\right)=\frac{1-4x}{(1-2x)^2},$$ and the $\mathbf{C}$ transform of $A_n^*$ has transform 
$$\mathbf{C}\left(\left(\frac{1+x^2}{1-x^2}\right)^2\right)=\frac{(1-2x)^2}{(1-4x)^{3/2}}.$$ 
We obtain, respectively, the Hankel transforms 
$$H(x)=\frac{1+4x^2}{(1-2x)(1+2x)^2},$$ which expands to give 
$$1, -2, 12, -24, 80, -160, 448, -896, 2304, -4608, 11264,\ldots,$$ or 
$$h_n=(-2)^n (2 \lfloor \frac{n}{2} \rfloor +1),$$  and 
$$H^*(x)=\frac{1+8x-36x^2+192x^3-144x^4+128x^5+64x^6}{(1-2x)^3(1+2x)^4}.$$ This expands to give $h_n^*$ which begins 
$$1, 6, -36, 360, -1200, 5600, -15680, 56448, -145152, 456192,\ldots.$$ 
We note that the sequence of ratios $\frac{h_n^*}{h_n}$ then begins
$$1, -3, -3, -15, -15, -35, -35, -63, -63, -99, -99,\ldots.$$ This latter sequence has generating function
$$\frac{1-4x-2x^2-4x^3+x^4}{1-x-2x^2+2x^3+x^4-x^5}=\frac{1-4x-2x^2-4x^3+x^4}{(1+x)^2(1-x)^3}.$$

\section{Conclusion}
Interesting transformations of sequences abound. The significance of the $\mathbf{C}$ transform arises from a number of points:
\begin{itemize}
\item It is easily defined in terms of the notion of the ``half'' of a Riordan array
\item As shown in the note, many interesting sequences have simple $\mathbf{C}$ pre-images
\item Hankel transforms seem to behave nicely in the sense that many images of simple rational generating functions have images whose Hankel transforms are also relatively simple rational generating functions
\item Study of this transform can lead to interesting sequences, as detailed in the examples above
\item Study of this transform is facilitated by the use of Riordan arrays, and in doing so, interesting Riordan arrays emerge.
\end{itemize}
It would appear then that this transform merits further consideration.

\bigskip
\hrule
\noindent 2020 {\it Mathematics Subject Classification}: Primary
15B36; Secondary 11B83, 11C20, 11Y55.
\noindent \emph{Keywords:} Riordan group, Riordan array, central coefficients, Hankel transform, generating function.

\bigskip
\hrule
\bigskip
\noindent (Concerned with sequences
\seqnum{A000032},
\seqnum{A000034},
\seqnum{A000245},
\seqnum{A000346},
\seqnum{A000957},
\seqnum{A001263},
\seqnum{A002249},
\seqnum{A004442},
\seqnum{A007318},
\seqnum{A007854},
\seqnum{A010872},
\seqnum{A026012},
\seqnum{A028242},
\seqnum{A029635},
\seqnum{A029651},
\seqnum{A039599},
\seqnum{A055248},
\seqnum{A072547},
\seqnum{A077021},
\seqnum{A078008},
\seqnum{A094527},
\seqnum{A097070},
\seqnum{A099324},
\seqnum{A100320},
\seqnum{A106566},
\seqnum{A106853},
\seqnum{A107920},
\seqnum{A110162},
\seqnum{A115140},
\seqnum{A118973},
\seqnum{A118973},
\seqnum{A129869},
\seqnum{A141223},
\seqnum{A151821},
\seqnum{A158499},
\seqnum{A158500},
\seqnum{A187307},
\seqnum{A266724}, and
\seqnum{A272931}.)

\end{document}